\newlength{\labwidth}
\newcommand{\labarrow}[1]{
\settowidth{\labwidth}{$\scriptstyle \;\; #1 \;\;$}
\stackrel{#1}{\smash{\hbox to \labwidth{\rightarrowfill}}
\vphantom{\longrightarrow}}
}
\newcommand{\bP}{{\mathbf P}}
\renewcommand{\S}{{\mathbb S}}
\newcommand{\m}{{\mathfrak m}}
\DeclareMathOperator*{\colim}{colim}
\newcommand{\comment}[1]{}
\newcommand {\Z}{{\mathbb Z}}
\newcommand {\TM}{{\mathbb{TMF}_1(3)}}
\newcommand {\T}{{\mathbb T}}
\newcommand {\U}{{\mathbb U}}
\newcommand {\KK}{{\mathbb{M}}}
\newcommand {\E}{{\mathbb E}}
\newcommand {\B}{{\mathbb B}}
\newcommand {\bbP}{{\mathbb P}}
\newcommand {\bbM}{{\mathbb M}}
\newcommand {\CC}{{\mathbb C}}
\newcommand{\lra}{\longrightarrow}              
\newfont{\german}       {eufm10 at 12pt}
\DeclareMathOperator{\mfgl}{\star}
\numberwithin{equation}{section}
\newtheorem{thm}[equation]{Theorem}
\newcounter{numerierer}
\newcounter{leer}
\newtheorem{defn}[equation]{Definition}
\newtheorem{prop}[equation]{Proposition}
\newtheorem{cor}[equation]{Corollary}
\newtheorem{lemma}[equation]{Lemma}
\theoremstyle{definition}  
\newtheorem{set theory}[equation]{Set Theoretic Prelude}
\newtheorem{convention}[equation]{Convention}
\newtheorem{remark}[equation]{Remark}
\subjclass{}
\begin{document}

\title{$TMF_0(3)$--Characteristic Classes for String Bundles}
\author{Gerd Laures and Martin Olbermann}

\address{ Fakult\"at f\"ur Mathematik,  Ruhr-Universit\"at Bochum, NA1/66, D-44780 Bochum, Germany}


\subjclass[2000]{Primary 55N34; Secondary 55P20, 22E66}

\date{\today}

\begin{abstract}
We compute the completed $TMF_0(3)$--cohomology of the 7--connected cover $BString$ of $BO$. We use cubical structures on line bundles over elliptic curves to construct an explicit class which together with the Pontryagin classes freely generates the cohomology ring.
\end{abstract}

\maketitle

\section{Introduction and statement of results}
Characteristic numbers play an important role in the determination of the structure of cobordism rings. For unoriented, oriented and spin manifolds, the cobordism rings were calculated in the 50s and 60s by Thom \cite{MR0061823}, Novikov \cite{MR0157381} and Anderson, Brown and Peterson \cite{MR0219077} with the help of Stiefel--Whitney, $H\Z$-- and $KO$--Pontryagin numbers. However, it is known that for manifolds with lifts of the tangential structure to the 7--connected cover $BString$ of $BO$ these numbers do not determine the bordism classes.\par
Locally at the prime 2, the Thom spectrum $MSpin$ splits into summands of connected covers of $KO$ and an Eilenberg-MacLane part \cite[Theorem 2.2]{MR0219077}. A similar splitting is conjectured for $MString$ where $KO$ is replaced by suitable versions of the spectrum $TMF$: the Witten orientation provides a surjection of the string bordism ring to the ring of topological modular forms \cite[Theorem 6.25]{MR1989190} and there is evidence \cite[Section 7]{MR2508904} that another summand of $MString $ is provided by the 16--connected cover of $TMF_0(3)$. In order to provide maps to this possible summand one has to study  $TMF_0(3)$--characteristic classes for string manifolds which is the subject of this work.\par
In \cite{L13} the $TMF_1(3)$--cohomology rings of $BSpin$ and $BString$ were computed. It turned out \cite[Theorem 1.2]{L13} that the $TMF_1(3)$--cohomology ring of $BSpin$ is freely generated by certain classes $p_i$. These deserve the name Pontryagin classes since they share the same properties as the $H\Z$-- and $KO$--Pontryagin classes. In the case of $BString$, there is another class $r$ coming up which together with the Pontryagin classes freely generates the $TMF_1(3)$--cohomology ring when localized at $K(2)$ for the prime 2. \par
The theory $TMF_1(3)$ is a complex orientable theory. Its formal group is the completion  of the universal elliptic curve with $\Gamma_1(3)$--structures. Its relation to $TMF_0(3)$ is analogous to the relation between complex and real $K$--theory:  a $\Gamma_1(3)$--structure is a choice of point of exact order 3 on an elliptic curve. A $\Gamma_0(3)$--structure is the choice of  subgroup scheme of the form $\Z/3$ of the points of order 3. Given such a subgroup scheme there are exactly two choices of  points of exact order 3 and they differ by a sign. Hence the corresponding cohomology theory $TMF_0(3)$ is the ``real'' version of the complex theory $TMF_1(3)$. It can be obtained by taking homotopy fixed points under the action which changes the sign of the 3 division point. It is not a complex orientable theory.
\par
The theory $TMF_1(3)$ admits a genuine equivariant $\Z/2$--action which turns it into  a Real theory in the sense of Atiyah \cite{MR0206940}, meaning an equivariant theory where the $\Z/2$--action is induced by complex conjugation. We construct this action in Section \ref{RealTMFPont} with the help of the work of Hu and Kriz \cite{MR1808224}. The associated fixed point spectrum is $TMF_0(3)$.
This allows us to lift the Pontryagin classes to $TMF_0(3)$ for $Spin$--bundles. Our first result is:
\begin{thm}\label{main Spin}
There are classes  $\pi _i\in TMF_0(3)^{-32i}BSpin$ which lift the products $v_2^{6i}p_i$ for the $TMF_1(3)$--Pontryagin classes $p_i$. Moreover, we have
$$ TMF_0(3)^{*}BSpin\cong TMF_0(3)^*[\![ \pi _1,\pi _2,\ldots]\!].$$
\end{thm}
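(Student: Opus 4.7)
My plan is to apply the homotopy fixed point spectral sequence (HFPSS) for the $\Z/2$-action on $TMF_1(3)$ constructed in Section \ref{RealTMFPont}, whose homotopy fixed points recover $TMF_0(3)$. Smashing with $BSpin_+$ yields
$$E_2^{s,t} = H^s(\Z/2; TMF_1(3)^t BSpin) \Longrightarrow TMF_0(3)^{s+t} BSpin,$$
where the input is the computation of \cite{L13}: $TMF_1(3)^*BSpin \cong TMF_1(3)^*[\![p_1,p_2,\ldots]\!]$ with $|p_i|=4i$. The whole calculation then reduces to analyzing the action, ruling out differentials and extensions, and identifying a set of permanent cycles whose lifts generate the target as a completed polynomial algebra.

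I would first determine the $\Z/2$-action on both the coefficient ring and the Pontryagin classes. Geometrically the involution is the $[-1]$-map on the universal elliptic curve, which acts on a modular form of weight $k$ by $(-1)^k$; since $v_2$ corresponds to a weight-$3$ form we have $v_2\mapsto -v_2$. On the Pontryagin classes, a splitting-principle argument based on the fact that the action sends a formal Chern root $x$ to $[-1]_F(x)=-x+\cdots$, together with the observation that the complexification of a real bundle has Chern roots appearing in $\{\pm x\}$-pairs, shows that the symmetric functions of squared roots defining the $p_i$ in \cite{L13} are fixed. Hence each $p_i$ lies in $E_2^{0,*}$, and so does $v_2^{6i}p_i\in E_2^{0,-32i}$.

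The crucial step is then to show that $v_2^{6i}p_i$ is a permanent cycle, so that we obtain the desired lift $\pi_i$. By analogy with the $KR\to KO$ HFPSS, where only sufficiently high $\beta$-powers survive the $v_1$-periodic differential pattern, here the obstructions against lifting individual $p_i$ live in positive cohomological $\Z/2$-filtration and are $v_2$-torsion of bounded order; the factor $v_2^{6i}$ is calibrated to annihilate them. I would verify this by pulling back the HFPSS differentials computed for the coefficient spectrum from Section \ref{RealTMFPont} (along the lines of Hu--Kriz) and propagating them to $BSpin$ using the polynomial structure of the input. Once the $\pi_i$ are in hand, the isomorphism $TMF_0(3)^*[\![\pi_1,\pi_2,\ldots]\!]\cong TMF_0(3)^*BSpin$ follows by comparison through the forgetful map to $TMF_1(3)^*BSpin$: the images $v_2^{6i}p_i$ are algebraically independent in the invariant subring, giving injectivity, and surjectivity is obtained filtration-wise using the skeletal filtration of $BSpin$ together with completeness of the target. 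The main obstacle I expect is the differential analysis in the HFPSS and the verification that no hidden additive or multiplicative extensions disturb the polynomial structure.
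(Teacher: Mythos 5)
Your overall framework (Borel/homotopy fixed point spectral sequence with $E_1$--input $TMF_1(3)^*BSpin\cong TMF_1(3)^*[\![p_1,p_2,\ldots]\!]$, invariance of the $p_i$, then permanent cycles) matches the paper's, and your identification of the action on coefficients and on Chern roots is consistent with Appendix \ref{GeomInvariance}. But the step you yourself flag as crucial --- showing that $v_2^{6i}p_i$ survives --- is where your argument has a genuine gap, and the mechanism you propose for it cannot work. First, the periodicity class ($\Delta$ or $v_2$) is inverted in $TMF_1(3)^*$, so $v_2^{6i}$ is (up to the choice in Convention \ref{convention}) a unit times a permanent cycle; multiplying by it cannot ``annihilate'' any obstruction: if $d_r(p_i)\neq 0$ then $d_r(v_2^{6i}p_i)\neq 0$ as well. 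The actual role of the factor $v_2^{6i}$ in the paper is purely degree-theoretic: the equivariant Pontryagin class naturally lives in $RO(\Z/2)$--degree $2i(1+\alpha)$, and multiplication by the invertible class $y^{2i}$ (with $y=\iota(v_2)^3\sigma^{-8}$ of degree $-17-\alpha$) shifts it into the integral degree $-32i$, which is needed because restriction to fixed points only exists in integral degrees. Second, ``propagating the coefficient differentials using the polynomial structure'' does not determine $d_r(p_i)$: the differentials on polynomial generators of the $E_1$--term are extra data, not consequences of the module structure over the coefficient spectral sequence. The paper supplies this missing input by an explicit construction (Proposition \ref{equivariant Pontryagin}): it lifts $\pi_i$ through $\widehat E^*BU$ using the surjectivity of $\widehat E^*BU\to\widehat E^*BSpin$ (Lemma \ref{surjbubspin}, which rests on evenness of $K(2)_*BSpin$, pro-freeness, and Nakayama), then lifts further to the Real spectra $\widehat{\B\bbP}$ and $\B\U$ of Hu--Kriz, and finally restricts to fixed points. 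None of this is replaceable by a differential count in the spectral sequence alone.

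Two further points you would still need to address. The construction above only produces lifts in the completed theory $\widehat{TMF_0(3)}$, whereas the theorem concerns $TMF_0(3)^*BSpin$; the paper gets back to the uncompleted statement by comparing the two Borel spectral sequences and showing inductively that the comparison map is injective on every $E_r$--term, so that the $\pi_i$ are permanent cycles in the uncompleted sequence as well. Your closing remark about ``the skeletal filtration of $BSpin$ together with completeness of the target'' does not substitute for this. Finally, convergence of the spectral sequence to $TMF_0(3)^*BSpin$ (rather than to a Borel-completed variant) requires the strong completion theorem $\TM\simeq F(E\Z/2_+,\TM)$, which you implicitly assume; it is established in Lemma \ref{SCT} and should be cited at the point where you set up the spectral sequence for the function spectrum.
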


The generator $r$ in the calculation of the  $TMF_1(3)$--cohomology of $BString$ has the property that it maps to a generator of $K(\Z,3)$ under the canonical map. In fact, it has been shown in \cite[Theorem 1.3]{L13} that any class with this property can serve as a generator. However, in order to obtain a class $r$ which is already defined in the theory $TMF_0(3)$ one has to provide a more geometric construction. We use the theory of cubical structures on elliptic curves which also played a role in \cite{ MR1869850} in the construction of the  Witten orientation. We show that a convenient choice of a generator $r$ is the difference class which compares the Witten orientation with the complex orientation. It turns out that this class admits a lift to the equivariant setting.
 Our main result is:
\begin{thm}\label{main}
For a string bundle $\xi$ over a space $X$ there is a natural stable class $$r(\xi) \in TMF_0(3)^0 X$$ with the following properties:
\begin{enumerate}
\item
$r$ is multiplicative: $r(\xi\oplus \eta)=r(\xi)\otimes r(\eta)$.
\item
There is an isomorphism
$$ \widehat{TMF_0(3)}^*[\![\tilde{r}, \pi _1,\pi _2,\ldots]\!] \lra \widehat{TMF_0(3)}^*BString$$
where $\widehat{TMF_0(3)}$ denotes $(L_{K(2)}TMF_1(3))^{h\Z/2}$
and, in abuse of notation, the class $\tilde{r}$ is the $K(2)$--local and reduced version of the class $r$ corresponding to the universal bundle over $BString$.
\item
The Chern character of the elliptic character \cite{MR1022688} of $r$
is given by the formula
$$ ch(\lambda(r(\xi))) = \prod_i\frac{\Phi(\tau,x_i-\omega)}{\Phi(\tau,-\omega)}
$$
where the $x_i$ are the formal Chern roots of $\xi\otimes \CC$, $\omega =2\pi i/3$ and $\Phi$ is the theta function
\begin{eqnarray*}
\Phi(\tau,x)&=&(e^{x/2}-e^{-x/2})\prod_{n=1}^\infty \frac{(1-q^ne^x)(1-q^ne^{-x})}{(1-q^n)^2}
\\&=&x\, \exp(-\sum_{k=1}^\infty \frac{2}{(2k)!}G_{2k}(\tau)x^{2k}).
\end{eqnarray*}

\end{enumerate}
\end{thm}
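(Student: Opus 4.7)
The plan is to construct $r(\xi)$ as the ratio of two trivializations of the cubical-structure line bundle --- one coming from the Witten orientation, the other from the complex orientation of $TMF_1(3)$ centred at the chosen 3-division point $\omega$ --- verify (i) and (iii) directly from this description, lift the construction equivariantly to $TMF_0(3)$ using the Real structure of Section \ref{RealTMFPont}, and deduce (ii) by combining Theorem \ref{main Spin} with \cite[Theorem 1.3]{L13}.

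For the construction I would follow the Ando--Hopkins--Strickland point of view. A string structure on $\xi$ is precisely a cubical structure on the pullback of the rigidified line bundle $\O(-e)$ over the universal elliptic curve, as in \cite{MR1869850}; the Witten orientation realises this data as a preferred section $s_W$ of the associated cubical-structure line bundle over $BString$. Independently, since $TMF_1(3)$ is complex orientable with formal group the completion of the universal elliptic curve at $\omega$, the complex orientation provides another trivialization $s_c$ of the same line bundle. Their ratio is a canonical invertible class $r(\xi) := s_W/s_c \in TMF_1(3)^0 X$, and multiplicativity (i) follows from multiplicativity of both trivializations under fibre-wise tensor products on the moduli side. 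For (iii), Miller's elliptic character \cite{MR1022688} identifies the Chern characters of $s_W$ and $s_c$ with products of theta-function factors centred respectively at the origin and at $\omega$; their quotient produces the claimed expression $\prod_i \Phi(\tau, x_i-\omega)/\Phi(\tau,-\omega)$.

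To lift $r(\xi)$ to $TMF_0(3)$ I would use that the Real structure on $TMF_1(3)$ from Section \ref{RealTMFPont} realises the $[-1]$-involution of the universal elliptic curve, which preserves the rigidified line bundle $\O(-e)$ and exchanges $\omega$ with $-\omega$. Both trivializations $s_W$ and $s_c$ admit Real refinements: the Witten one because the cubical structure is inversion-symmetric, and the complex-oriented one because $\xi\otimes\CC$ carries the canonical conjugation Real structure. Consequently $r(\xi)$ descends to $TMF_0(3)^0 X$ with the same Chern character. For (ii), combine Theorem \ref{main Spin} with the principal fibration $K(\Z,3)\to BString\to BSpin$: the Chern-character formula in (iii), expanded to leading order in the $x_i$, shows that the reduced class $\tilde r$ restricts along the fibre inclusion to a generator of $\widehat{TMF_0(3)}^* K(\Z,3)$; together with the $\pi_i$ from Theorem \ref{main Spin} this forces the Eilenberg--Moore spectral sequence for the fibration to collapse and identify the cohomology as the stated power series ring.

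The main obstacle will be establishing the equivariant lift rigorously. Since the cubical-structure construction of $s_W$ is moduli-theoretic, verifying that it is genuinely $\Z/2$-equivariant (and not merely invariant up to homotopy after passing to fixed-point spectra) requires tracking the Real enhancement of $TMF_1(3)$ through the formation of cubical structures on the universal elliptic curve and checking compatibility with the inversion involution. A secondary technical point is ruling out hidden multiplicative extensions in the $K(2)$-local homotopy fixed-point spectral sequence for $\widehat{TMF_0(3)}^* BString$; the $v_2^{6i}$ twists implicit in the definition of the $\pi_i$ from Theorem \ref{main Spin} must be tracked carefully to ensure that $\tilde r$ and the $\pi_i$ really are free generators without hidden relations.
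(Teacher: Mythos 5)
Your overall blueprint---define $r$ as the difference class $\sigma/x$ of the Witten and complex orientations on $BU\langle 6\rangle$, get (i) from multiplicativity of Thom classes and (iii) from the elliptic character, lift equivariantly, then deduce (ii)---is the paper's. But two of your steps have genuine gaps.

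First, the equivariant lift. You propose to make the cubical-structure construction genuinely $\Z/2$-equivariant by tracking Real refinements of $\sigma$ and $x$ through the inversion involution on the moduli stack, and you correctly flag this as the main obstacle; but the route you sketch is essentially unworkable, since $\sigma$ is only characterized up to homotopy and rigidifying it to a point-set equivariant map is not something the available technology provides. The paper sidesteps the issue entirely with Lemma \ref{MUBP}: after smashing with $\bbM\U$ and using the James splitting of $\Omega\Sigma(\CC\bP^\infty)^3$, the space $\B\U\langle 6\rangle$ is built from Real cells concentrated in degrees $k(1+\alpha)$, so the forgetful map $\TM^{k(1+\alpha)}\B\U\langle 6\rangle\to TMF_1(3)^{2k}BU\langle 6\rangle$ is an \emph{isomorphism} and $r_U$ lifts automatically. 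The geometric invariance computation (Appendix \ref{GeomInvariance}) is only used to identify the $\Z/2$-action on $r$ in the $E_2$-term of the fixed point spectral sequence, not to produce the lift.

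Second, and more seriously, your argument for (ii) cannot work as stated. You claim the Chern character formula of (iii), expanded to leading order, shows that $\tilde r$ restricts to a generator of $\widehat{TMF_0(3)}^*K(\Z,3)$. The Chern character is a rational invariant, and $H^{\mathrm{even}}(K(\Z,3);\Q)$ is trivial in positive degrees, so the Chern character of the degree-zero class $\tilde r$ restricted to the fibre carries no information whatsoever. That $\tilde r$ generates is a $K(2)$-local torsion phenomenon at $p=2$: the paper detects it by restricting $r_U$ to $(\CC\bP^\infty)^3$, computing that the coefficient of $x_0x_1x_2$ in the cubical structure equals $a_3=v_2$ modulo $(2,a_1,a_2)$ (Proposition \ref{cubical expansion}, proved in Appendix \ref{Comp}), and feeding this into the Ravenel--Wilson--Yagita Hopf-ring analysis of $K(2)_*\underline{BP\langle 1\rangle}_*$ (Lemma \ref{sulemma}) to invoke \cite[Theorem 1.3]{L13}. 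Moreover the paper does not run an Eilenberg--Moore spectral sequence over the non-complex-orientable theory $\widehat{TMF_0(3)}$; it first establishes $\widehat{TMF_1(3)}^*BString\cong\widehat{TMF_1(3)}^*[[\tilde r,\pi_1,\ldots]]$ (Theorem \ref{EBString}) and then descends via the homotopy fixed point spectral sequence, in which $\tilde r$ and the $\pi_i$ are invariant permanent cycles in filtration zero; this is also what disposes of the extension problems you worry about at the end.
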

The paper is organized as follows: in Section \ref{RealTMFPont} we first remind the reader of the theory $TMF_1(3)$ and construct its Real version in the category of $\Z/2$--equivariant spectra. Its homotopy fixed points spectrum provides a model for $TMF_0(3)$. This allows us to construct Pontryagin classes in the equivariant setting for string manifolds. Theorem \ref{main Spin} then follows from the homotopy fixed point spectral sequence. In the third section we use cubical structures to show that the difference class of two $MU\langle 6 \rangle $--orientations of $TMF_1(3)$  can be taken as the remaining generator in the cohomology of $BString$. This involves the computation of a specific coefficient of a  cubical structure which is carried out in Appendix B. In the fourth section, this class 
is  lifted to the equivariant setting. Since the realification of the difference class $r$ provides a permanent cycle in the homotopy fixed point spectral sequence the main theorem follows.
  
\subsubsection*{Acknowledgements.} The authors would like to thank Nitu Kitchloo, Bj\"orn Schuster and  Vesna Stojanoska  for helpful discussions. They are also grateful to the referee for a very careful revision.

\section{Real topological modular forms and Pontryagin classes} \label{RealTMFPont}
In this section we will construct Pontryagin classes for the $\Z/2$--equivariant spectrum of $\Gamma_1(3)$--topological modular forms  and prove the first theorem. We start reviewing level structures for elliptic curves and proceed with the associated spectra of topological modular forms. The method of Hu and Kriz (compare \cite{MR1808224}) allows a construction of $TMF_1(3)$ in the category of $\Z/2$--equivariant spectra.\par
Let ${\mathcal M}$ be the stack of smooth elliptic curves. A morphism $f:S\lra {\mathcal M}$ determines an elliptic curve $C_f$ over the base scheme $S$, see Deligne and Rapoport  \cite{MR0337993}.  There is the line bundle $\omega$ of invariant differentials on ${\mathcal M}$.  A modular form of weight $k$ is section of $\omega^{\otimes k}$. \par
For an elliptic curve $C$ over a ring $R$  let $C[n]$ denote the kernel of the self map  $[n]$ which multiplies by $n$ on $C$. If $n$ is invertible in $R$ then \'{e}tale-locally $C[n]$ is of the form $\Z/n\times \Z/n$. A choice of a subgroup scheme $A$ of $C[n]$ which is isomorphic to $\Z/n$ is called a $\Gamma_0(n)$--structure and a choice of a monomorphism from $\Z/n$ to $ C[n]$ is a $\Gamma_1(n)$--structure. 
\par
Moduli problems for $\Gamma_1(n)$--structures with $n=3$ or $n\geq 5$ are representable by a smooth affine curve whereas for $\Gamma_0(n)$--structures they are not
(compare \cite[Corollary 4.7.1]{MR0337993}).  The case $n=3$ can be made more explicit: locally, a curve $C$ can uniquely be written in the form 
 \begin{eqnarray}\label{curve}
 & C: &y^2 +a_1xy +a_3 y = x^3
 \end{eqnarray}
 in a way that the distinguished point $P$ of order 3 is the origin $(0,0)$ and the invariant differential has the standard form $\omega=dx/(2y+a_1x+a_3)$. This means that 
 the ring of modular forms of level 3 has the form 
 \begin{eqnarray}\label{mf}
  {M_{\Gamma_1(3)}}_*&\cong &\Z[1/3, a_1,a_3,\Delta^{-1}]
  \end{eqnarray}
where $\Delta=a_3^3(a_1^3-27a_3)$. See \cite[Proposition 3.2]{MR2508904} for more details. \par
 For each \'{e}tale map $f: Spec( R ) \lra {\mathcal M}$ with elliptic curve $C_f$ there is a spectrum $E$ which is a complex orientable ring spectrum whose formal group ${\rm Spf} E^0BU(1)$ is equipped with an isomorphism to the formal completion of $C_f$, see \cite{MR2648680}. The spectrum $TMF_1(3)$ can be obtained this way. Its  coefficient ring is $M_{\Gamma_1(3)}{}_*$ and it carries the universal curve $C$ together with the universal point $P$ of order 3.\par
Let $MU$ be the complex bordism spectrum in the 2--local category. Choose $x_i\in\pi_{2i}MU$ which generate the bordism ring 
$$ MU_*\cong \Z_{(2)}[x_1,x_2,\ldots].$$
Let
$$ \varphi: MU_*\lra  M_{\Gamma_1(3)}{}_*$$
be the map which classifies the formal group law associated to the Weierstrass curve \eqref{curve}  with its standard coordinate as described in \cite[Chapter IV]{MR2514094}. Note that this map already takes values in the ring $\Z_{(2)}[a_1,a_3]$. Define polynomials $f_i\in \Z_{(2)}[s,t]$ in two variables $s,t$ by  $$\varphi(x_i)=f_i(a_1,a_3).$$  We may assume $f_1=s$, $f_3=t$ since for $p=2$ the Hazewinkel generators are $v_1=a_1$ and $v_2=a_3$ (c.\cite[Lemma 1]{MR2076927}). 
The sequence $r_i=x_i-f_i(x_1,x_3)$ for $i=2,4,5,6\ldots$ is regular in $MU_*$ and the map
$$ MU_*/r_1,r_2,\ldots \lra TMF_1(3)_*$$
is an isomorphism after inverting $\Delta$. Hence, $TMF_1(3)$ is a (strict) $MU$--module which is obtained by killing off a regular sequence and localizing at an element of $MU_*$ (see \cite[Chapter V]{MR1417719}).

If we invert $v_2$ instead of $\Delta$,  we obtain another module with homotopy $\Z_{(2)}[v_1,v_2^{\pm 1}]$. Both modules are generalized Johnson--Wilson spectra $E(2)$ 
in the sense of \cite[Definition 2.3]{L13} since $\Delta$ coincides with a power of $v_2$ modulo the ideal $(2,a_1)$ , compare \cite[Proposition 2.4]{L13}. 
Many theorems about the standard $E(2)$ carry over to generalized $E(2)$ --- often the vanishing of the higher $v_k$ is not needed in the proofs.
\begin{convention}\label{convention}
In the sequel it is not important if  the discriminant $\Delta$ is inverted instead of $v_2$ only. All of our statements hold for both modules and we will not distinguish between them in the notation. The inverted class will be called the periodicity class $P$ in the sequel, and is a polynomial in $v_1$ and $v_2$. Also, all spectra and homotopy groups will be localized 
at the prime $2$. 
\end{convention}
\par
Next we would like to construct the $\Z /2$--equivariant version of $TMF_1(3)$. For an object $\E$ in the category of $\Z/2$--equivariant spectra over a complete universe, we write $\E_*$ for the coefficients in integral dimensions and we write $\E_\star$ for the coefficients in all dimensions $k+\l\alpha$ for $k,l\in \Z$ and $\alpha$ the sign representation. Let $\bbM \U$ be the Real bordism spectrum. Hu and Kriz \cite[Proposition 2.27]{MR1808224} showed that the canonical map from $\bbM  \U_\star$ to $MU_*$ splits by a map of rings $\iota$ which sends the generator $x_i$ of $MU_*$ to a class of dimension $i(1+\alpha)$. 

Moreover, the action of $\Z/2$ on $MU$ which changes the coordinate $x$ to $[-1](x)$ corresponds to the $\Z/2$--action on $TMF_1(3)$ with homotopy fixed point set $TMF_0(3)$: it comes from the map on the moduli stack which takes the point of order 3 to its negative. A detailed argument is given in Appendix \ref{GeomInvariance}.

As in \cite[p.332]{MR1808224}  we may kill the sequence $\iota(r_1), \iota(r_1),\ldots$ in $\bbM \U _\star$ (with $r_i$ defined above) in the category of  $\bbM \U$--module spectra and invert the periodicity class. We have constructed the Real spectrum $\TM$.

It will be useful to construct an equivariant version of the $K(2)$--localization $\widehat{TMF_1(3)}=L_{K(2)}TMF_1(3)$,
whose coefficients are $\widehat{TMF_1(3)}^*\cong (TMF_1(3)^*)^{\wedge}_{(2,v_1)}$.
Set $$\KK(2,\iota(v_1))=\KK(2)\wedge _{\bbM \U}\KK(\iota(v_1)),$$
where the Koszul spectrum $\KK(\beta)$ is the fibre of the canonical map from $\bbM \U$ to $\bbM \U[\beta^{-1}]$. Define the $\bbM \U$--function spectrum 
\begin{align*}
\widehat{\TM}& = F_{\bbM \U}(\KK(2,\iota(v_1)), \TM)\\
& \cong  F_{\bbM \U[\iota(v_2)^{-1}]}(\KK(2,\iota(v_1))[\iota(v_2)^{-1}], \TM).
\end{align*}  
It follows from \cite[Chapter XXIV, Theorem 5.3, (5.2)]{MR1413302} that $\widehat{\TM}$ is the Bousfield localization with respect to $\bbM\U/(2,\iota(v_1))[\iota(v_2)^{-1}]$ and its coefficient ring is given by the completion with respect  
to the ideal $I=(2,\iota(v_1))$.

\begin{lemma} \label{SCT}
Let $i^*$ be the forgetful functor from $\Z/2$--equivariant to non-equivariant spectra.
\begin{enumerate}
\item
$i^*\TM \cong TMF_1(3)$
\item
 $i^*\widehat{\TM} \cong\widehat{TMF_1(3)}$  
\item 
$\TM^{\Z /2}\cong \TM^{h\Z /2}\cong  TMF_0(3)$
\item
 $\widehat{\TM}^{\Z /2}\cong \widehat{\TM}^{h\Z /2}\cong  \widehat{TMF_0(3)}$
\end{enumerate}
\end{lemma}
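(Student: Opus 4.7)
The plan is to prove the four assertions in turn, making use of the fact that $\TM$ and $\widehat{\TM}$ were constructed by equivariant analogues of the non-equivariant recipes for $TMF_1(3)$ and $\widehat{TMF_1(3)}$. For (i) I would use that the forgetful functor $i^*$ is strong symmetric monoidal and preserves cofibre sequences and sequential colimits. The Hu-Kriz splitting $\iota\colon MU_* \to \bbM\U_\star$ sends $x_i$ to a class of dimension $i(1+\alpha)$ whose underlying non-equivariant class is again $x_i$, so $i^*\iota(r_i)=r_i$ and $i^*$ of the periodicity class is the non-equivariant $P$. Applying $i^*$ to the tower of cofibres defining $\TM$ therefore reproduces the $MU$-module description of $TMF_1(3)$ recalled above. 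For (ii), the equivariant Koszul fibre $\KK(\beta)$ is defined by the same cofibre sequence as its non-equivariant counterpart, so $i^*\KK(2,\iota(v_1))$ is the ordinary Koszul spectrum $K(2,v_1)$. Since this spectrum is finitely built from $\bbM\U$-modules, $i^*$ commutes with the function-spectrum construction, producing $i^*\widehat{\TM} \cong F_{MU}(K(2,v_1),TMF_1(3))$, which is the EKMM model of $L_{K(2)}TMF_1(3)=\widehat{TMF_1(3)}$ already cited.

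Part (iii) has two steps. First, by Appendix \ref{GeomInvariance}, the underlying $\Z/2$-action on $TMF_1(3)$ is the one induced by negation of the distinguished point of exact order three, so its homotopy fixed points coincide with the Goerss-Hopkins-Behrens model of $TMF_0(3)$, giving $\TM^{h\Z/2} \cong TMF_0(3)$. Second, to identify genuine fixed points with homotopy fixed points I would prove vanishing of the Tate spectrum $\TM^{t\Z/2}$: this forces the norm map $\TM_{h\Z/2} \to \TM^{h\Z/2}$ to be an equivalence, and hence $\TM^{\Z/2} \cong \TM^{h\Z/2}$. Tate vanishing can be analysed via the Tate spectral sequence using the $\bbM\U$-module description together with the Hu-Kriz computation of $\bbM\U_\star$; alternatively one can appeal to descent for the $\Z/2$-equivariant sheaf of $E_\infty$-rings on the moduli stack $\mathcal{M}_1(3)$, whose stacky quotient is $\mathcal{M}_0(3)$.

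Part (iv) then follows by combining (ii) with (iii): since $\widehat{\TM}=F_{\bbM\U}(\KK(2,\iota(v_1)),\TM)$ is formed from a finitely constructed source, the canonical map gives an equivalence $\widehat{\TM}^{h\Z/2} \simeq F_{\bbM\U}(\KK(2,\iota(v_1)),\TM^{h\Z/2})$, which by (iii) is $\widehat{TMF_0(3)}$, while Tate vanishing is inherited by the completion and yields the genuine fixed-point statement as well. The main obstacle will be the Tate-vanishing step in (iii). For plain Real bordism $\bbM\U$ the Tate spectrum is \emph{not} contractible, so the argument must genuinely exploit that we have killed the higher $v_k$ (modulo the periodicity class) and localized to chromatic height two; this is where the specific chromatic character of $\TM$ is essential, and any slip there propagates through (iii) and (iv).
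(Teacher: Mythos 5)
Your treatment of (i) and (ii) matches the paper's (which is very brief: $i^*\bbM\U\cong MU$ for (i), plus Ravenel's identification of $K(2)$--localization with $MU/(2,v_1)[v_2^{-1}]$--localization for (ii)), and your identification of $\TM^{h\Z/2}$ with $TMF_0(3)$ via the action on the level structure is also the paper's route. The genuine gap is in your reduction of ``genuine fixed points $=$ homotopy fixed points'' to Tate vanishing. Vanishing of $\TM^{t\Z/2}$ forces the norm map $\TM_{h\Z/2}\to\TM^{h\Z/2}$ to be an equivalence, but that statement involves only the Borel-complete (cofree) replacement of $\TM$ and says nothing by itself about the genuine fixed points. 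In the isotropy separation diagram, $\TM^{\Z/2}\to\TM^{h\Z/2}$ is an equivalence if and only if the canonical map $\Phi^{\Z/2}\TM\to\TM^{t\Z/2}$ is one; given Tate vanishing you would therefore additionally need $\Phi^{\Z/2}\TM\simeq *$, which your argument never addresses. The implication you assert is false in general: for $X=\tilde{E}\Z/2$ one has $X_{h\Z/2}\simeq X^{h\Z/2}\simeq X^{t\Z/2}\simeq *$ while $X^{\Z/2}\simeq S^0$. This is exactly why the paper does not argue via the Tate construction but instead invokes the \emph{strong completion theorem} of Kitchloo--Wilson: the map $\TM\to c\TM=F(E\Z/2_+,\TM)$ is an equivalence of genuine $\Z/2$--spectra (checked for generalized $E(2)$ with $v_2$ inverted), and passing to $\Z/2$--fixed points then gives $\TM^{\Z/2}\simeq\TM^{h\Z/2}$ directly.

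The same issue propagates into your (iv), where you again appeal to ``Tate vanishing inherited by the completion.'' The paper instead proves (iv) by showing that $\widehat{\TM}$ is itself cofree, via the purely formal adjunction chain
$\widehat{\TM}\cong F_{\bbM\U}(\KK(2,v_1),F(E\Z/2_+,\TM))\cong F(E\Z/2_+,F_{\bbM\U}(\KK(2,v_1),\TM))\cong F(E\Z/2_+,\widehat{\TM})$,
which uses (iii) but requires no finiteness or commutation claims about the source. If you want to salvage your outline, replace the Tate step by a proof of cofreeness of $\TM$ (equivalently, of $\Phi^{\Z/2}\TM\simeq\TM^{t\Z/2}$); your observation that the chromatic input (inverting $v_2$, so that the differential $d_7(\sigma^{-4})=\iota(v_2)a^7$ kills the relevant $E_\infty$--term after inverting the Euler class $a$) is essential is correct, but it must be applied to the right comparison map.
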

\begin{proof}
Non-equivariantly, we have $i^*\bbM \U \cong MU$, so the first statement follows from what we said before. For the second statement, 
we also need the fact \cite[Theorem 2.1(a)]{MR737778} that
$K(2)$--localization can be replaced by localization with respect to the spectrum $MU/(2,v_1)[v_2^{-1}]$.

For the third one, observe that 
the spectrum $\TM$ satisfies the strong completion theorem, that is, we can replace it with $c{\TM }=F(E\Z/2_+,\TM )$. 
This has been shown for classical Johnson--Wilson spectra by Kitchloo and Wilson in \cite[Appendix A]{MR3044602} using results from \cite{MR1808224}. It is easy to check that the arguments given there also apply to all generalized
$E(2)$ for which $v_2$ is invertible.
It follows that the homotopy fixed point spectrum $\TM^{h\Z /2}$ coincides with the fixed point spectrum $\TM^{\Z /2}$ up to homotopy. 
For the last statement observe that we have equivalences
\begin{eqnarray*}
 \widehat{\TM}& \cong & F_{\bbM \U}(\KK(2,v_1), F(E\Z/2_+,\TM))\\
 &\cong& F_{\bbM \U}(\KK(2,v_1), F_{\bbM \U}(\bbM \U \wedge E\Z/2_+,\TM))\\
 &\cong& F_{\bbM \U}(\bbM \U \wedge E\Z/2_+, F_{\bbM \U}(\KK(2,v_1), \TM))\\
&\cong&F(E\Z/2_+,\widehat{\TM}).
\end{eqnarray*}
\end{proof}

Since $\TM\cong F(E\Z/2_+,\TM)$ there is a Borel cohomology spectral sequence \cite[Section 2]{MR1831351}
 $$E_1=C^r(\Z/2,F(\Z/2_+,\TM)^{s+t\alpha})\Rightarrow \TM^{r+s+t\alpha}.$$
The integral degree part of the spectral sequence can be identified with the homotopy fixed point spectral sequence 
 $$E_1=C^r(\Z/2,TMF_1(3)^s)\Rightarrow (TMF_1(3)^{h\Z/2})^{r+s}$$
which Mahowald and Rezk \cite{MR2508904} used  to compute $\pi_*TMF_0(3)$.
However, when we use the whole $(\Z+\Z\alpha)$--grading, the multiplicative structure of the spectral sequence is easier to understand.
  
One has $F(\Z/2_+,\TM)^\star \cong TMF_1(3)^*[\sigma^{\pm 1}]$, where
$\sigma$ is the composite
$$ \Sigma^{-1 +\alpha}\Z /2_+=\Sigma^{-1}(S^\alpha \wedge \Z /2_+)\cong \Sigma ^{-1}(S^1 \wedge \Z/2_+)=\Z/2_+\lra pt_+=S^0\lra \TM$$
of degree $1-\alpha$.
Thus we have an isomorphism 
\begin{align*}
E_1\cong  TMF_1(3)^*[\sigma^{\pm 1},a],
\end{align*}
where $a$ has degree $\alpha$. By the calculation given in Appendix \ref{GeomInvariance},  the group $\Z/2$ acts by $a_i\mapsto -a_i$.
In the spectral sequence we have permanent cycles $a,\sigma^{\pm 8}$, $v_1\sigma$ which corresponds to $\iota(v_1)\in\TM^{-1-\alpha}$ and
$v_2\sigma^3$ which corresponds to $\iota(v_2)\in\TM^{-3-3\alpha}$. We will use the notation $\iota(v_1)$ respectively 
$\iota(v_2)$ also for the cycles $v_1\sigma$ and $v_2\sigma^3$ respectively.
For later use we note that the permanent cycle $v_2^3\sigma$ corresponds to the invertible class $y=\iota(v_2)^3 \sigma^{-8}\in\TM^{-17-\alpha}$.
The non-zero differentials are induced by $d_1(\sigma^{-1})=2a, d_3(\sigma^{-2})=\iota(v_1)a^3, d_7(\sigma^{-4})=\iota(v_2)a^7$.
All of this follows from \cite[Section 2]{MR1831351} using naturality of the Borel spectral sequence.

\bigskip

Let  $p_i\in TMF_1(3)^{4i}B{\mathit Spin}$ for $i=1,2\ldots$ be the Pontryagin classes. Recall from \cite[Theorem 1.2]{L13} that they are uniquely determined by the following property: the series
$p(t)=1+p_1t+p_2t^2\ldots$ is given by $$ \prod_{i=1}^m (1-t\rho^*(x_i \overline{x}_i))$$
when restricted to the classifying space of each maximal torus of $Spin(2m)$. Here, $\rho$ is the map to the maximal torus of $SO(2m)$ and the $x_i$ (and  $\overline{x}_i$) are the first $TMF_1(3)$--Chern classes of the canonical line bundles $L_i$ (resp.\ $\overline{L}_i$) over the classifying spaces of the tori. 

We can use the  $\bbM \U $--orientation of $ \TM$  to define Pontryagin classes in $\TM^{\Z/2}$--cohomology. However, the construction is less explicit than in the complex case. We start with some preparations. 
\begin{lemma} \label{surjbubspin}
The composite
$$g: \xymatrix{BSpin \ar[r] & BSO \ar[r]& BU}$$
is surjective in $\widehat{TMF_1(3)}$--cohomology.
\end{lemma}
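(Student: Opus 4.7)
The strategy combines the splitting principle with topological Nakayama for complete augmented algebras. By \cite[Theorem 1.2]{L13}, in the $K(2)$-localized setting, $\widehat{TMF_1(3)}^*BSpin \cong \widehat{TMF_1(3)}^*[\![p_1, p_2, \ldots]\!]$, a complete augmented $\widehat{TMF_1(3)}^*$-algebra with augmentation ideal $I = (p_1, p_2, \ldots)$. Since $\widehat{TMF_1(3)}$ is complex-orientable, $\widehat{TMF_1(3)}^*BU \cong \widehat{TMF_1(3)}^*[\![c_1, c_2, \ldots]\!]$ is likewise a completed power series algebra on Chern classes. A continuous $\widehat{TMF_1(3)}^*$-algebra map between such rings is surjective if and only if the induced map on indecomposables $I/I^2$ is. It therefore suffices to realize each $p_k$, modulo $I^2$, as a $\widehat{TMF_1(3)}^*$-linear combination of the pullbacks $g^*(c_j)$.

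Using the splitting principle, I would restrict to the classifying space $BT$ of a maximal torus $T \subset Spin(2m)$ via the composite $BT \to BSpin(2m) \to BSO(2m) \to BU(2m)$. The universal Chern roots of $BU(2m)$ pull back to the family $\{x_i, \overline{x}_i\}_{i=1}^m$, where $\overline{x}_i = [-1]_F(x_i)$ is the formal inverse in the formal group law of $\widehat{TMF_1(3)}$. Hence
\[
g^*c(t)\bigr|_{BT} = \prod_{i=1}^m (1 + tx_i)(1 + t\overline{x}_i) = \prod_{i=1}^m \bigl(1 + ta_i + t^2 b_i\bigr),
\]
where $a_i = x_i + \overline{x}_i$ and $b_i = x_i\overline{x}_i$. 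The formal group identity $x_i +_F \overline{x}_i = 0$ forces $a_i$ to be a power series in $b_i$ with no constant term, so $a_i \in (b_i)$. The coefficient of $t^{2k}$ then decomposes as $e_k(b_1, \ldots, b_m)$ --- the contribution where $t^2 b_i$ is chosen in exactly $k$ out of $m$ positions --- plus correction terms involving at least two $a_i$ factors, each of total $b$-weight at least $k+1$. Since $e_k(b) = (-1)^k p_k$ by the defining property of the Pontryagin classes recalled above, and higher-$b$-weight corrections reduce modulo $I^2$ to linear combinations of $p_j$ with $j > k$, one obtains
\[
g^*(c_{2k}) \equiv (-1)^k p_k + \sum_{j > k} \mu_{kj}\, p_j \pmod{I^2}
\]
for some $\mu_{kj} \in \widehat{TMF_1(3)}^*$.

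This is an upper-triangular system with $\pm 1$ diagonal entries, which inverts in the $I$-adic completion to express each $p_k$ as a convergent series in $\{g^*(c_{2j})\}_{j \geq k}$; convergence is automatic because $g^*(c_{2j})$ itself lies in the $j$-th Pontryagin-weight filtration. Hence $g^*$ is surjective on indecomposables, and therefore surjective. The principal delicate step is the leading-term identification modulo $I^2$, which rests on the formal group observation $a_i = x_i + \overline{x}_i \in (b_i)$ --- a consequence of the relation $x_i +_F \overline{x}_i = 0$ --- together with the compatibility between the $b$-weight filtration and the $I$-adic filtration modulo $I^2$.
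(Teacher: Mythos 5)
Your argument takes a genuinely different route from the paper's. The paper works modulo the maximal ideal $\m=(2,v_1)$ of the coefficients: it quotes the known facts that $BU$ and $BSpin$ have even $K(2)$--homology and that $g$ is injective in $K(2)$--homology (hence surjective in $K(2)$--cohomology), observes that both cohomology rings are pro-free over $\widehat{TMF_1(3)}^*$ with $K(2)^*(-)\cong\widehat{TMF_1(3)}^*(-)/(2,v_1)$, and concludes by Nakayama's lemma with respect to $\m$. You instead run a Nakayama-type argument with respect to the augmentation ideal $I$ and check surjectivity on indecomposables by an explicit splitting-principle computation with the formal group law. Your route avoids the $K(2)$--homology literature but leans on \cite[Theorem 1.2]{L13} for the power-series structure of $\widehat{TMF_1(3)}^*BSpin$ (an input the paper also uses, so there is no circularity), and your leading-term identification $g^*(c_{2k})|_{BT}=e_k(b)+(\text{corrections})$ with $e_k(b)=(-1)^kp_k$ is correct.

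Two steps carry real content that you assert rather than prove. First, the weight argument needs not merely $a_i=x_i+\overline{x}_i\in(b_i)\subseteq E^*[[x_i]]$ but the stronger statement that $a_i$ is a power series in $b_i$ with zero constant term: only then are the correction terms symmetric power series in $b_1,\dots,b_m$, so that their decomposition into $b$--weight pieces lives in $E^*[[e_1(b),e_2(b),\dots]]$, where the pieces of weight $w>k$ split into multiples of $e_w(b)=(-1)^wp_w$ plus monomials of length $\ge 2$ in the $e_j$'s, i.e.\ elements of $I^2$. The relation $x_i+_F\overline{x}_i=0$ directly gives only divisibility by $b_i$ inside $E^*[[x_i]]$; the upgrade holds because $E^*[[x_i]]$ is free of rank two over $E^*[[b_i]]$ with $x_i$ a root of a monic quadratic whose coefficients are $x_i+\overline{x}_i$ and $x_i\overline{x}_i$ (equivalently, by expressing $x_i^n+\overline{x}_i^n$ via Newton's identities), and this needs to be said. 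Second, your congruence is established after restriction to the maximal tori, whereas the conclusion is a congruence in $\widehat{TMF_1(3)}^*BSpin$ modulo $I^2$; transferring it requires injectivity of the restriction of $\widehat{TMF_1(3)}^*BSpin$ to the inverse limit of the torus cohomologies (true, since the $e_j(b)$ have algebraically independent leading terms $\pm e_j(x_1^2,\dots,x_m^2)$, but nowhere invoked). With these two points supplied, together with the usual care about closed ideals and convergence in power series rings on infinitely many generators, your proof goes through; as it stands, the ``principal delicate step'' you flag is exactly where the argument is incomplete.
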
  
\begin{proof}
Recall from \cite[Theorem 1.1(iii)]{MR1909866} and \cite[(1.6)]{MR2093483}
that both $BU$ and $BSpin$ have even $K(2)$--homology and that the map $g$ is injective in $K(2)$--homology, thus surjective in $K(2)$--cohomology.
In particular, this holds for the generalized $K(2)$ given by $\widehat{TMF_1(3)}/(2,v_1)$, see also \cite[Remark 3.1]{L13}. 

By \cite[Proposition 2.5]{MR1601906}  (which also applies to generalized $E(2)$) it follows that $\widehat{TMF_1(3)}^*(BU)$ and $\widehat{TMF_1(3)}^*(BSpin)$
are pro-free, i.e. they are completions of free $R=\widehat{TMF_1(3)}^*$-modules with respect to the ideal $\m=(2,v_1)$. We also deduce that $K(2)^*X\cong \widehat{TMF_1(3)}^*(X) /(2,v_1)$ for $X=BU$ and $X=BSpin$.

Now it suffices to prove that any homomorphism $f:M\to N$ of pro-free $R$-modules which is surjective modulo $\m$ is surjective. 
This is Nakayama's lemma. It follows by tensoring $f$ with the short exact sequences $\m^k/\m^{k+1} \to R/\m^{k+1}\to R/\m^k$ and applying induction,
noting that $\m^k/\m^{k+1}$ is a free $R/\m$-module.
\end{proof}
\begin{prop}\label{equivariant Pontryagin}
The  images of the classes 
$$\pi_i=v_2^{6i}p_i\in TMF_1(3)^{-32i}BSpin$$ in $\widehat{TMF_1(3)}^{-32i}BSpin$ lift to $\widehat{TMF_0(3)}^{-32i}BSpin$.
\end{prop}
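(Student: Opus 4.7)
The plan is to build an explicit equivariant lift $\tilde\pi_i \in \widehat{\TM}^{-32i}BSpin$ and then invoke Lemma \ref{SCT} to identify $\widehat{\TM}^{\Z/2} \simeq \widehat{TMF_0(3)}$.

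First I would use the $\bbM\U$-module structure on $\widehat{\TM}$ together with the Hu-Kriz splitting $\iota$ to set up equivariant Chern classes for Real complex vector bundles. Applied to the complexification $\xi_\CC$ of the universal spin bundle $\xi$ over $BSpin$ (which carries a canonical Real structure by fiberwise complex conjugation), this yields equivariant Chern classes $\tilde c_j(\xi_\CC) \in \widehat{\TM}^{j(1+\alpha)}BSpin$ and corresponding equivariant Pontryagin classes
$$\tilde p_i := (-1)^i \tilde c_{2i}(\xi_\CC) \in \widehat{\TM}^{2i(1+\alpha)}BSpin.$$
By construction, the forgetful functor $i^*$ to non-equivariant spectra sends $\tilde p_i$ to the classical Pontryagin class $p_i \in \widehat{TMF_1(3)}^{4i}BSpin$.

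To shift into the purely integer-graded part $\widehat{\TM}^{-32i}BSpin$, I would multiply by the invertible permanent cycle $y = \iota(v_2)^3\sigma^{-8} \in \widehat{\TM}^{-17-\alpha}$ introduced just before the proposition, setting
$$\tilde\pi_i := y^{2i}\tilde p_i \in \widehat{\TM}^{-32i}BSpin.$$
The degree computation $2i(-17-\alpha) + 2i(1+\alpha) = -32i$ is purely integer. The $E_\infty$-representative of $y$ is the permanent cycle $v_2^3\sigma$, which maps to $v_2^3$ under $i^*$, so $i^*(y^{2i}) = v_2^{6i}$ and hence $i^*(\tilde\pi_i) = v_2^{6i}p_i = \pi_i$. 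Lemma \ref{SCT} then identifies $\tilde\pi_i$ as an element of $\widehat{TMF_0(3)}^{-32i}BSpin$ whose image in $\widehat{TMF_1(3)}^{-32i}BSpin$ is the image of $\pi_i$, completing the lift.

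The main technical obstacle is the careful set-up of the equivariant Chern class theory from the $\bbM\U$-module structure, and verifying the compatibility $i^*(\tilde p_i) = p_i$; once this is in place the remaining bookkeeping is routine. The invertibility of $y$, which enables the degree shift from $2i(1+\alpha)$ into $-32i$, is guaranteed by $K(2)$-locality (making $\iota(v_2)$ invertible) together with $\sigma^{\pm 8}$ being a permanent cycle.
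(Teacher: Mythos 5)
Your overall architecture---produce an equivariant class over $\B\U$ in degree $2i(1+\alpha)$ restricting to the correct non-equivariant class, pull back to $BSpin$ with its trivial action, multiply by the invertible permanent cycle $y^{2i}$ to land in integral degree $-32i$, and invoke Lemma \ref{SCT}---is exactly the paper's. But there is a genuine gap at the step $i^*(\tilde p_i)=p_i$. For the complex orientation of $TMF_1(3)$ coming from the Weierstrass coordinate $x=X/Y$ one has $\overline{x}=[-1](x)\neq -x$ (the universal curve has $a_1,a_3\neq 0$), so restricted to a maximal torus the total Chern class of $\xi\otimes\CC$ is $\prod_i(1+tx_i)(1+t\overline{x}_i)$: its odd Chern classes do not vanish, and $c_{2i}(\xi_\CC)$ is $\sigma_{2i}(x_1,\overline{x}_1,\dots,x_m,\overline{x}_m)$, not $(-1)^i\sigma_i(x_1\overline{x}_1,\dots,x_m\overline{x}_m)$. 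Since the Pontryagin classes of \cite{L13} are characterized by $p(t)=\prod_i(1-t\rho^*(x_i\overline{x}_i))$, already $p_1\neq -c_2(\xi_\CC)$, so your $\tilde\pi_i$ lifts the wrong class.

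The paper closes this gap without writing $p_i$ as any explicit polynomial in Chern classes: Lemma \ref{surjbubspin} (even $K(2)$-homology of $BU$ and $BSpin$, pro-freeness, Nakayama) shows that $\widehat{TMF_1(3)}^*BU\to\widehat{TMF_1(3)}^*BSpin$ is surjective, so the image of $p_i$ in the completed theory is the pullback of \emph{some} class on $BU$; that class is then lifted equivariantly using the surjection $\left(P^{-1}\widehat{BP}\right)^{4i}BU\to\widehat{TMF_1(3)}^{4i}BU$ together with the Hu--Kriz isomorphism $\left(\iota(P)^{-1}\widehat{\B\bbP}\right)^{2i(1+\alpha)}\B\U\cong\left(P^{-1}\widehat{BP}\right)^{4i}BU$. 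That isomorphism is also the precise substitute for your unproved assertion that equivariant Chern classes with $i^*\tilde c_j=c_j$ exist in $\widehat{\TM}$-cohomology, which you correctly flag as the main technical obstacle but do not supply. Note too that the surjectivity is only available after $K(2)$-localization, which is why the proposition speaks of the images of $\pi_i$ in $\widehat{TMF_1(3)}^{-32i}BSpin$; a repaired version of your argument must likewise work in the completed theory. The $y^{2i}$ degree shift and the fixed-point identification in your last step are fine and agree with the paper.
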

\begin{proof}
We abbreviate $E={TMF_1(3)}$. 
The proof is given by the commutative diagram
\begin{eqnarray*}
\xymatrix{
\left(\iota(P)^{-1}\widehat{\B \bbP} \right)^{2i(1+\alpha)}\B\U
\ar[r]\ar[d]^{\cong}
&
\widehat\E^{2i(1+\alpha)}\B\U
\ar[r]^{\cdot y^{2i}}_{\cong}\ar[d]
&
\widehat\E^{-32i}\B\U
\ar[d]\ar[dr]
\\
\left(P^{-1}\widehat{BP}\right)^{4i}BU
\ar@{->>}[r]
&
\widehat E^{4i}BU
\ar[r]^{\cdot v_2^{6i}}_{\cong}\ar@{->>}[d]
&
\widehat E^{-32i}BU
\ar[d]
&
\left(\widehat\E^{\Z/2}\right)^{-32i}BSpin
\ar[dl]
\\
&
\widehat E^{4i}BSpin
\ar[r]^{\cdot v_2^{6i}}_{\cong}
&
\widehat E^{-32i}BSpin,
}
\end{eqnarray*}
which we will explain below. We lift the class $\pi_i\in \widehat E^{-32i}BSpin$ along the bottom left edge to the top left corner, and then the image along the right top edge in $\left(\widehat \E^{\Z/2}\right)^{-32i}BSpin$ is the desired class.
Let $\widehat{BP}=L_{K(2)}BP$. Again we have an equivariant version $\widehat{\B\bbP}$.
We write $\B\U$ for the $\Z/2$--equivariant space of finite-dimensional real subspaces of $\CC^\infty$ with the complex conjugation as involution. 
By \cite[2.25, 2.28]{MR1808224}  the periodicity class $P$ as defined in Convention \ref{convention} has an equivariant lift $\iota(P)\in \B \bbP^\star$ and 
there is a canonical isomorphism
$ \left(\iota(P)^{-1}\widehat{ \B \bbP} \right)^{2i(1+\alpha)}\B\U  \cong  \left(P^{-1}\widehat{ BP}\right)^{4i}BU.$
Since $P^{-1}\widehat{BP}\to \widehat E$ is surjective on coefficients, we have a surjection $\left(P^{-1}\widehat{BP}\right)^*BU\to \widehat E^*BU$.
The map $\widehat E^*BU\to \widehat E^*BSpin$ is surjective by Lemma \ref{surjbubspin}.

The top right diagonal map in the diagram is restriction to fixed points $\widehat \E^*\B\U \lra \left(\widehat\E^{\Z/2}\right)^*BO$ composed with restriction from $BO$ to $BSpin$. 
Note that restriction to fixed points is only possible for integral degree equivariant cohomology. 
This is why we proceed as in \cite[Section 5]{KW13} and multiply with powers of the invertible class $y\in \widehat\E^{-17-\alpha}$ which non-equivariantly corresponds to the class $v_2^3$.
\end{proof}

\begin{proof}[Proof of \ref{main Spin}. ]
We equip $BSpin$ with the trivial $\Z/2$--action. Since the spectra  $ \TM$ and $\widehat{\TM}$  satisfy the completion theorem,
so do the function spectra $F(BSpin,\TM)$ and $F(BSpin,\widehat{\TM})$. It follows that we have two Borel spectral sequences and a comparison map 
between them. Also, both spectral sequences for the function spectra are modules over the spectral sequences for a point.
The integral degree parts of the spectral sequences are the homotopy fixed point spectral sequences 
$$E_1^{s,t}=C^s(\Z/2, TMF_1(3)^{t}BSpin)\Rightarrow (TMF_1(3)^{h\Z/2})^{s+t}BSpin$$
and 
$$E_1^{s,t}=C^s\left(\Z/2, \widehat{TMF_1(3)}^{t}BSpin\right)\Rightarrow \left(\widehat{TMF_1(3)}^{h\Z/2}\right)^{s+t}BSpin$$
respectively. By \cite{L13} we have $$TMF_1(3)^*BSpin\cong TMF_1(3)^*[[\pi _1,\pi _2,\dots]]$$ 
and $$\widehat{TMF_1(3)}^*BSpin\cong \widehat{TMF_1(3)}^*[[\pi _1,\pi _2,\dots]].$$ 
We first consider the second Borel spectral sequence:
Its $E_1$--term is $$E_1=\left(\widehat{TMF_1(3)}^*BSpin\right)[\sigma^{\pm 1},a].$$
From Proposition \ref{equivariant Pontryagin}  we know that that the Pontryagin classes $\pi_i$ are invariant --- a more geometric proof of this property is given in Appendix \ref{GeomInvariance} --- and survive the homotopy fixed point spectral sequence. 
Thus the Pontryagin classes are permanent cycles in the Borel spectral sequence, and the integral degree part converges to 
$$ \widehat{TMF_0(3)}^*[\![\pi _1,\pi _2,\ldots]\!]. $$

By induction
the comparison map between the two Borel spectral sequences is injective on all $E_r$--terms. It follows that the $\pi_i$ are also permanent cycles in the first Borel spectral sequence, so that the integral degree part converges to
$$ TMF_0(3)^*[\![\pi _1,\pi _2,\ldots]\!]. $$
 and Theorem \ref{main Spin} is proven. 
 \end{proof}
From the Borel spectral sequence it follows that $TMF_0(3)^{32j}\to TMF_1(3)^{32j}$ is injective for all $j$, see also \cite[page 1011]{MR3044602}. 
This implies that $TMF_1(3)^{-32i}BSpin\to TMF_0(3)^{-32i}BSpin$ is injective and the lift
of $\pi_i\in TMF_1(3)^{-32i}BSpin$ to $\pi_i\in TMF_0(3)^{-32i}BSpin$ is uniquely determined.
  \begin{cor}
The total Pontryagin classes $\pi_t=1 +\pi_1 t+\pi_2 t^2 +\ldots$ satisfy the Cartan formula, i.e. for all spin bundles $\xi$ and $\eta$ over a space $X$ we have
  $$\pi _t(\xi \oplus \eta) = \pi _t(\xi)\pi _t(\eta) \in (TMF_0(3)^*X)[\![t]\!].$$
 \end{cor}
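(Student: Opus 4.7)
The plan is to reduce the Cartan formula to the universal identity on $BSpin \times BSpin$, verify it in $TMF_1(3)$-cohomology where it follows from the corresponding property of the classes $p_i$, and then transport it back to $TMF_0(3)$ by an injectivity argument parallel to the one used at the end of the proof of Theorem \ref{main Spin}.

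First, by naturality of the $\pi_i$ under pullback of spin bundles, it suffices to establish the universal identity
$$ \mu^* \pi_t = (p_1^* \pi_t)(p_2^* \pi_t) \in \left( TMF_0(3)^*(BSpin \times BSpin) \right)[\![t]\!], $$
where $\mu\colon BSpin \times BSpin \to BSpin$ classifies Whitney sum and $p_1,p_2$ are the two projections. Pulling back along $(f,g)\colon X \to BSpin\times BSpin$, with $f,g$ classifying $\xi,\eta$, then yields the desired formula for arbitrary spin bundles.

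Next, I would verify the analogous identity in $TMF_1(3)$-cohomology. By \cite[Theorem 1.2]{L13} the classes $p_i$ are defined via elementary symmetric polynomials in the Chern roots $\rho^*(x_i\bar x_i)$ and satisfy $p_t(\xi \oplus \eta) = p_t(\xi)p_t(\eta)$. Multiplying by the constant scalars $v_2^{6i}$ preserves this identity, since $6j + 6k = 6i$ whenever $j+k=i$. Hence the images of the $\pi_i$ in $TMF_1(3)$-cohomology, which equal $v_2^{6i} p_i$, satisfy the Cartan formula.

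Finally, I would invoke injectivity of the comparison map
$$ TMF_0(3)^{-32i}(BSpin \times BSpin) \longrightarrow TMF_1(3)^{-32i}(BSpin \times BSpin). $$
This is a direct extension of the injectivity noted just after the proof of Theorem \ref{main Spin}: by a K\"unneth argument in the pro-free, even-concentrated setting, $TMF_1(3)^*(BSpin \times BSpin)$ is the completed polynomial algebra on two copies of the Pontryagin classes, so the Borel homotopy fixed point spectral sequence argument applies verbatim to the product. Consequently the difference $\mu^* \pi_i - \sum_{j+k=i} (p_1^* \pi_j)(p_2^* \pi_k)$ lies in $TMF_0(3)^{-32i}(BSpin \times BSpin)$ and maps to zero downstairs, hence vanishes. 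The only step requiring any care is checking that the Borel spectral sequence argument carries over to $BSpin \times BSpin$, but the relevant hypotheses (pro-freeness and even concentration of the $TMF_1(3)$-cohomology) pass immediately to the product, so no new input is needed.
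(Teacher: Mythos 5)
Your proposal is correct and follows essentially the same route as the paper, whose entire proof is the one-line remark that the Cartan formula is ``inherited from the $TMF_1(3)$--Pontryagin classes''; your write-up simply makes explicit what that inheritance means, namely the formula for the $p_i$ from \cite[Theorem 1.2]{L13} combined with injectivity of $TMF_0(3)^{-32i}(-)\to TMF_1(3)^{-32i}(-)$ on the universal example. Your extra care in checking that the injectivity argument extends from $BSpin$ to $BSpin\times BSpin$ (via pro-freeness and the degrees being multiples of $32$) is a detail the paper leaves implicit, and it is handled correctly.
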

\begin{proof}
This property of the $TMF_0(3)$--Pontryagin classes is inherited from the $TMF_1(3)$--Pontryagin classes.
\end{proof}
\section{Cubical structures and the difference class}\label{CubicalStr}
After the construction of the Pontryagin classes we now consider the remaining generator $r$ of the $TMF_1(3)$--cohomology of $BString$. In \cite[Theroem 1.3]{L13} this class has been specified by the property that it maps to a generator under the canonical map to the cohomology of $K(\Z,3)$. However, not all of these choices will have a lift to $TMF_0(3)$.
In this section we will give a specific  choice of the class which allows the desired lift. It measures the difference of  two specific orientations of $TMF_1(3)$.
\par
We start by recalling the construction of the $\sigma$--orientation $\sigma:MU\langle 6\rangle \to E$ for any elliptic spectrum $E$ 
using cubical structures from \cite[Section 2]{MR1869850}. 

Let $C$ be an elliptic curve over a base scheme $S$. 
The ideal sheaf ${\mathcal I}(0)={\mathcal I}_S$ associated to the zero section $S\to C$ defines a line bundle over $C$.
For $G$ a (formal) group scheme over $S$ and all subsets $J\subseteq\{0,1,2\}$ there are maps 
\begin{align*}
\sigma_J\colon G^3 =G\times_SG\times_SG &\to G\\
(g_0,g_1,g_2)&\mapsto \sum_{j\in J} g_i.
\end{align*}
For a line bundle $L$ over $G$ we define the line bundle $\Theta^3(L)$ over $G^3$ by 
$$\Theta^3(L)=\bigotimes_{J\subseteq\{0,1,2\}}\left( \sigma_J^*L\right)^{(-1)^{\lvert J \rvert}}.$$
Recall from \cite[Definition 2.40]{MR1869850} that a cubical structure on 
$L$ is a global section of $\Theta^3(L)$ satisfying certain properties.
The theorem of the cube  \cite[Theorem 2.53]{MR1869850} implies that each elliptic curve has a unique cubical structure on ${\mathcal I}(0)$.
On the other hand,  if $\hat{C}$ denotes the formal group of the elliptic curve,
and $E$ is an elliptic cohomology theory such that $\hat{C}\cong {\rm Spf}\, E^0{\mathbb C}P^{\infty}$, then we have 
\begin{thm}\cite[Corollary 2.50]{MR1869850}
Cubical structures on the restriction of ${\mathcal I}(0)$ to $\hat{C}^3$
are in bijection with ring spectrum maps $MU\langle 6 \rangle\to E$.
\end{thm}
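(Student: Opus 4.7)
My plan is to follow the Ando--Hopkins--Strickland strategy. A ring spectrum map $MU\langle 6\rangle \to E$ amounts to a system of multiplicative, normalized Thom classes $U(\xi) \in E^*(\Th(\xi))$ for $BU\langle 6\rangle$-bundles, compatible under pullback. By the splitting principle applied to $BU(1)^n\langle 6\rangle \to BU(n)\langle 6\rangle$, the whole system is determined by its restrictions to the covers of the maximal tori for all $n$. The key observation is that the $5$-connectivity of $BU\langle 6\rangle$ forces the restriction of the universal Thom class to $BU(1)^n$ to factor through the $5$-connected cover of $(\CC P^\infty)^n$, which on the level of formal spectra corresponds to constraining the data to live in sections of the line bundle $\Theta^3(\mathcal{I}(0))$ over $\hat{C}^n$ rather than the naive $\mathcal{I}(0)$ itself.

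Next I would identify this data with a cubical structure on $\mathcal{I}(0)|_{\hat{C}^3}$. Starting from the expression for the Thom class of $L_0\oplus L_1\oplus L_2$ in terms of the $c_1(L_i)$, the combination forced by the connectivity constraint is precisely an alternating tensor product over subsets $J\subseteq\{0,1,2\}$ of pullbacks along the partial sum maps $\sigma_J$, matching exactly the definition of $\Theta^3(\mathcal{I}(0))$ given in the excerpt. Multiplicativity of the Thom class under direct sum yields the symmetry and cocycle properties of a cubical structure, while the normalization $U(0) = 1$ forces rigidity along the sections where one of the three arguments vanishes.

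Conversely, given a cubical structure $s$, I would construct a ring map by first producing Thom classes on $BU(1)^n\langle 6\rangle$ for all $n$ by a product formula in $s$, and then checking that these descend to $BU(n)\langle 6\rangle$ and glue into a Thom class on the limit $BU\langle 6\rangle$. The main obstacle, and the heart of the theorem, is to show that three-variable cubical data already suffice to define and control all higher $n$-variable Thom-class compatibilities: this is precisely what the theorem of the cube (already invoked in the preceding theorem of the excerpt) provides on the algebraic side, ensuring that the higher compatibilities follow automatically from the $3$-variable data. With that in hand, the two constructions are mutually inverse by uniqueness of the rigidified trivializations, yielding the asserted bijection.
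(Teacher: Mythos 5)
Your overall architecture does match the Ando--Hopkins--Strickland argument that the paper is citing (the paper itself does not reprove this statement; it quotes \cite[Corollary 2.50]{MR1869850}): pass from $MU\langle 6\rangle$ to $BU\langle 6\rangle$ by a Thom isomorphism, restrict along $f=\prod(1-L_i)\colon(\CC\bP^\infty)^3\to BU\langle 6\rangle$, note that because $\prod_i(1-L_i)=\sum_{J\subseteq\{0,1,2\}}(-1)^{|J|}\bigotimes_{j\in J}L_j$ the pullback of the Thom sheaf along $f$ is exactly $\Theta^3({\mathcal I}(0))$ on $\hat{C}^3$, and check that multiplicativity and normalization of the Thom classes translate into the symmetry, cocycle and rigidity axioms of a cubical structure. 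However, one of your intermediate steps is off: the bundle $\bigoplus_i L_i$ over $BU(1)^n$ has nonvanishing $c_1$ and $c_2$, so its classifying map does not lift to $BU\langle 6\rangle$ and there is no splitting principle for ``$BU(1)^n\langle 6\rangle\to BU(n)\langle 6\rangle$'' in the form you invoke; likewise ``factoring through the $5$-connected cover of $(\CC\bP^\infty)^n$'' is not what happens. The correct mechanism is that the specific rank-zero virtual bundle $(1-L_0)(1-L_1)(1-L_2)$ has vanishing $c_1,c_2$ and hence a canonical lift $f$ to $BU\langle 6\rangle$, and its Euler class is a section of $\Theta^3({\mathcal I}(0))$.

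The more serious gap is in the converse direction, which is the heart of the theorem: you attribute the fact that the three-variable data already determine the entire ring map (equivalently, that $f^*\colon E^0BU\langle 6\rangle\to E^0(\CC\bP^\infty)^3$ is injective with image exactly the cubical structures) to the theorem of the cube. The theorem of the cube is a statement about line bundles on complete abelian varieties; in this circle of ideas it is what equips the elliptic curve $C$ with its unique cubical structure and hence produces the $\sigma$-orientation, but it says nothing about the formal scheme $\hat{C}^3$, about $n$-variable compatibilities, or about the topology of $BU\langle 6\rangle$. The actual input in \cite{MR1869850} is topological: the computation (going back to Singer) that for $k\le 3$ the homology of $BU\langle 2k\rangle$ is even, torsion-free, and generated by the image of $H_*((\CC\bP^\infty)^k)$ under $f_k$, from which one identifies $E_0BU\langle 6\rangle$ as an explicit quotient of the algebra generated by $E_0(\CC\bP^\infty)^3$ and, dually, ring maps out of $\Sigma^\infty BU\langle 6\rangle_+$ with cubical structures. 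That this generation statement fails for $k=4$ --- there is no analogous description of $E^0BU\langle 8\rangle$ --- shows that a genuinely topological property of $BU\langle 6\rangle$ is being used and cannot be supplied by the algebro-geometric theorem of the cube. As written, your argument is missing exactly this ingredient.
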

In particular, the cubical structure on $C$ defines a distinguished cubical structure $s$ on $\hat{C}$,
and a distinguished ring spectrum map $\sigma:MU\langle 6 \rangle\to E$.

If $C\to S$ is given in Weierstrass form
$$Y^2Z+a_1XYZ+a_3YZ^2=X^3+a_2X^2Z+a_4XZ^2+a_6Z^3,$$ 
that is, as the subset of ${\mathbb P}^2$ of all $c=[X:Y:Z]$ satisfying the above equation, 
the cubical structure on ${\mathcal I}(0)$ is given by the section $s(c_0,c_1,c_2)=t(c_0,c_1,c_2)d(X/Y)_0$ of $\Theta^3({\mathcal I}(0))$ over $C^3$,
where $$t(c_0,c_1,c_2)
=\frac{\left\lvert\begin{matrix}X_0&Z_0\\ X_1 & Z_1\end{matrix}\right\rvert \left\lvert\begin{matrix}X_1&Z_1\\ X_2 & Z_2\end{matrix}\right\rvert \left\lvert\begin{matrix}X_2&Z_2\\ X_0 & Z_0\end{matrix}\right\rvert}{\left\lvert\begin{matrix}X_0&Y_0&Z_0\\ X_1 & Y_1 & Z_1\\ X_2 & Y_2 & Z_2\end{matrix}\right\rvert Z_0Z_1Z_2}.$$
Here $c_i=[X_i:Y_i:Z_i]$, and $d(X/Y)_0$ is a section of the bundle $p^*\omega$, where $p:C^3\to S$ is the projection. Note that $t$ is a function on $C^3$ with divisor
$$D=-\sum_i [c_i=0]+\sum_{i<j} [c_i+c_j=0]-[c_0+c_1+c_2=0],$$ 
where $[c_i=0]$ denotes the largest closed subscheme of $C^3$ where $c_i$ is equal to the zero section of the elliptic curve, and so on.
The function $t$ is a trivialization of the corresponding line bundle ${\mathcal I}_D$, and one has
an isomorphism $$\Theta^3({\mathcal I}(0))\cong {\mathcal I}_D\otimes p^*\omega.$$
See \cite[Appendix B.4.3]{MR1869850}.

We want to use the Thom isomorphism $E^*MU\langle 6 \rangle \cong E^*BU\langle 6\rangle$ to obtain a class in $E^*BU\langle 6\rangle$, therefore we need to choose a Thom class.
Each complex orientation of $E$ induces a Thom class, and defining a complex orientation of $E$ 
is equivalent to defining a coordinate on the formal group $\hat{C}$.
It thus makes sense to use the same notation $x$ for  a coordinate on the formal group $\hat{C}$ and an element of $E^*BU(1)$ and a ring spectrum map $MU\to E$.
By abuse of notation, we also denote the composition $MU\langle 6\rangle \to MU \to E$ by $x$.

The coordinate $x$ defines a trivialization of the bundle ${\mathcal I}(0)$. 
Using this trivialization, cubical structures on the restriction of ${\mathcal I}(0)$ to $\hat{C}^3$ correspond to cubical structures
on the trivial line bundle over $\hat{C}^3$ (or equivalently power series in three variables satisfying certain properties). 
Ando, Hopkins and Strickland show that the latter can be identified
with ring spectrum maps $BU\langle 6 \rangle_+\to E$, where we have identified the space $BU\langle 6 \rangle_+$ with its suspension spectrum.
This uses composition with the map 
 \begin{eqnarray}\label{map f}
 &&f=\prod(1-L_i): (\CC\bP^\infty)^3\to BU\langle 6 \rangle
 \end{eqnarray}
which is the unique lift up to homotopy of a corresponding map $$\prod(1-L_i): (\CC\bP^\infty)^3\to BU$$ which classifies the exterior tensor product
of the virtual bundle $1-L$ over each factor. Here $L$ is the canonical line bundle on $\CC\bP^\infty$.
Note the  isomorphism $E^*(\CC\bP^\infty)^3\cong E^*[[x_0,x_1,x_2]]$ after a choice of coordinate.

Now we consider the formal group of the Weierstrass curve:
the zero section of $C$ is $[X:Y:Z]=[0:1:0]$, and we choose
the function $x=X/Y$ as a coordinate on $\hat{C}$. We also define the function $z=Z/Y$. 
These new ``variables'' $x$ and $z$ are coordinates for the affine chart $Y=1$ of ${\mathbb P}^2$, which contains the zero section (and $\hat{C}$).
The intersection of $C$ with this affine chart consists of all points with coordinates $(x,z)$ such that
$$z+a_1xz+a_3z^2=x^3+a_2x^2z+a_4xz^2+a_6z^3.$$ 
The restriction of $z$ to $\hat{C}$ is a formal power series $$z(x)=x^3-a_1x^4+(a_1^2+a_2)x^5-(a_1^3+2a_1a_2+a_3)x^6+\dots$$
in the coordinate $x$, which is obtained by solving the previous equation for $z$. One also has a formal expansion of the addition $+_F$ on $\hat{C}$:
$$x_0+_F x_1=x_0+x_1+a_1x_0x_1-a_2(x_0^2x_1+x_0x_1^2) + \dots$$
The trivialization $x$ of the restriction of ${\mathcal I}(0)$ leads to corresponding trivializations $d(X/Y)_0$ of the restriction of $p^*\omega$
and $$u(x_0,x_1,x_2)=\frac{(x_0+_F x_1)(x_1+_F x_2)(x_2+_F x_0)}{x_0x_1x_2(x_0+_F x_1+_Fx_2)}$$
of the restriction of ${\mathcal I}_D$.
Also $$t=\frac{\left\lvert\begin{matrix}x_0&z_0\\ x_1 & z_1\end{matrix}\right\rvert \left\lvert\begin{matrix}x_1&z_1\\ x_2 & z_2\end{matrix}\right\rvert \left\lvert\begin{matrix}x_2&z_2\\ x_0 & z_0\end{matrix}\right\rvert}{\left\lvert\begin{matrix}x_0&1&z_0\\ x_1 & 1 & z_1\\ x_2 & 1 & z_2\end{matrix}\right\rvert z_0z_1z_2}$$
becomes a section of the trivial bundle, hence a quotient of two formal power series in $x_0,x_1,x_2$, since $z_i=z(x_i).$

Finally the power series $\frac tu$ is the cubical structure on the trivial line bundle which corresponds to $\sigma$ under the Thom isomorphism with Thom class $x$.
Since the Thom isomorphism $E^*BU\langle 6 \rangle \to  E^*MU\langle 6\rangle$ is given by multiplying with $x$, we denote the cohomology class corresponding to 
the inverse image of $\sigma$ by $$r_U=\frac{\sigma}x.$$ In the literature such a class is called a difference class of the two Thom classes $\sigma$ and $x$.
The corresponding reduced class $\frac{\sigma}{x}-1$ is denoted by $\tilde{r}_U$ in the sequel. 
\begin{prop}\label{cubical expansion}
The cubical structure  $r_U=\frac{t(x_0,x_1,x_2)}{u(x_0,x_1,x_2)}\in E^*[[x_0,x_1,x_2]]$ 
is given by
$$ 1-(a_1a_2-3a_3)x_0x_1x_2-(a_1a_3-a_2^2+5a_4)(x_0^2x_1x_2+x_0x_1^2x_2+x_0x_1x_2^2)+\dots.$$
\end{prop}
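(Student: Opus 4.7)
The plan is to expand $t/u$ as a formal power series in $x_0, x_1, x_2$ to total degree $4$, exploiting the rigidity of cubical structures to eliminate most of the bookkeeping.

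First, every cubical structure $r$ on the trivial line bundle over the formal group $\hat{C}$ satisfies the rigidity condition $r(0, x_1, x_2) = r(x_0, 0, x_2) = r(x_0, x_1, 0) = 1$, since $\Theta^3$ of the trivial bundle is canonically trivialized along each coordinate axis. Combined with symmetry in $(x_0, x_1, x_2)$, this forces $r_U - 1$ to be divisible by $x_0 x_1 x_2$ in $E^*[[x_0, x_1, x_2]]$. Writing
\[
r_U = 1 + \alpha \cdot x_0 x_1 x_2 + \beta \cdot (x_0^2 x_1 x_2 + x_0 x_1^2 x_2 + x_0 x_1 x_2^2) + (\text{terms of total degree} \geq 5),
\]
the task reduces to identifying the scalars $\alpha$ and $\beta$.

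Second, expand the two ingredients needed for $t$ and $u$. Solving the Weierstrass equation iteratively for $z(x) = Z/Y$ yields
\[
z(x) = x^3 - a_1 x^4 + (a_1^2 + a_2) x^5 - (a_1^3 + 2 a_1 a_2 + a_3) x^6 + O(x^7),
\]
with a further term involving $a_4$ at order $x^7$ that is needed for the coefficient $\beta$. In parallel, expand the formal group law $F(x, y) = x + y + a_1 xy - a_2(x^2 y + xy^2) + \ldots$ to matching order.

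Third, assemble $t$ and $u$ as rational functions in the $x_i$ and divide. Both agree to leading order with the additive cubical structure $u_{\mathrm{add}} = \frac{(x_0 + x_1)(x_1 + x_2)(x_0 + x_2)}{x_0 x_1 x_2 (x_0 + x_1 + x_2)}$. For $u$ the corrections come from $F(x, y) - (x + y) = a_1 xy + \ldots$; for $t$ the $2 \times 2$ minors expand as $x_i z_j - x_j z_i = x_i x_j (x_j^2 - x_i^2) - a_1 x_i x_j (x_j^3 - x_i^3) + \ldots$, and the $3 \times 3$ determinant factors via the Vandermonde--Schur identity with leading term $-V \cdot (x_0 + x_1 + x_2)$, where $V = \prod_{i<j}(x_j - x_i)$. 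Divide $t/u$ and read off $\alpha$ and $\beta$ from the coefficients of $x_0 x_1 x_2$ and $x_0^2 x_1 x_2 + x_0 x_1^2 x_2 + x_0 x_1 x_2^2$.

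The main obstacle is purely computational volume. The specific combinations $a_1 a_2 - 3 a_3$ and $a_1 a_3 - a_2^2 + 5 a_4$ emerge only after intricate cancellation of the $a_i$-corrections from $t$ against those from $u$; in particular, the $a_4$-dependence of $\beta$ requires keeping $z(x)$ through order $x^7$. Two built-in consistency checks mitigate the risk of error: any monomial of $t/u$ that is not divisible by all three of $x_0, x_1, x_2$ must cancel by rigidity, and the final expansion must be symmetric in $(x_0, x_1, x_2)$.
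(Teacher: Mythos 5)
Your strategy is the same one the paper uses: expand $t$ and $u$ as power series by means of $z(x)$ and the formal group law, divide, and read off coefficients; your preliminary observations (rigidity plus symmetry forcing $r_U-1$ to be divisible by $x_0x_1x_2$, the expansions of $z$ and $+_F$, the Vandermonde--Schur leading term of the $3\times 3$ determinant, and the fact that $t$ and $u$ share the additive leading term) are all correct. The issue is that the entire content of the proposition is the values of the two scalars, and your write-up stops exactly where the work begins: you never extract $\alpha=-(a_1a_2-3a_3)$ and $\beta=-(a_1a_3-a_2^2+5a_4)$, deferring to ``computational volume,'' so as a proof of the statement as written it is incomplete. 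For what it is worth, the paper does the same: it asserts that the displayed coefficients ``can be calculated with a computer algebra system'' and only verifies by hand (in Appendix B) that the coefficient of $x_0x_1x_2$ equals $a_3$ modulo $(2,a_1,a_2)$, which is all that the rest of the argument needs, since one only uses that this coefficient is a unit in $K(2)^*$. If you want a computation you can actually finish by hand, follow that route: reduce modulo $(2,a_1,a_2)$ first, where $z(x)=x^3+a_3x^6+\cdots$ and $x_0+_Fx_1=x_0+x_1+a_3x_0^2x_1^2+\cdots$, and chase only the $x_0x_1x_2$ term; the full integral coefficients, including the $a_4$-dependence of $\beta$ that forces you to carry $z(x)$ to order $x^7$, are best left to a machine.
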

This can be calculated with a computer algebra system. However, 
later we will only use that the coefficient in front of $x_0x_1x_2$ equals $a_3$ when reduced modulo $(2,a_1,a_2)$.
This can easily be calculated by hand, since modulo $(2,a_1,a_2)$ the formal group law  and the series $z$ read 
\begin{align*}
x_0+_Fx_1 &= x_0+x_1+a_3x_0^2x_1^2 +\text{higher terms},\\
z(x)&= x^3+a_3x^6 + \text{higher terms}.
\end{align*}
See Appendix \ref{Comp} for details of this computation.

In the following we consider the elliptic spectrum $TMF_1(3)$ whose corresponding elliptic curve 
has a canonical Weierstrass form. Therefore we obtain a distinguished class $r_{U}=\frac{\sigma}{x}\in TMF_1(3)^*BU\langle 6 \rangle$.

There is a commutative diagram of fibrations:
\begin{eqnarray}\label{cpj}
&\xymatrix{
K(\Z,3)
\ar[r]^i\ar[d]^{\cdot 2}
&
BString 
\ar[r]\ar[d]^c
&
BSpin
\ar[d]
\\
K(\Z,3)
\ar[r]^j
&
BU\langle 6 \rangle
\ar[r]
&
BSU
}&
\end{eqnarray}
where $c$ is induced from complexification of vector bundles.
We denote $$r=c^*r_U\in TMF_1(3)^*BString.$$

\begin{remark}
It is interesting to note that since $\sigma: MU\langle 6\rangle \to TMF_1(3)$ factors \cite{AHR10} through the realification map $MU\langle 6\rangle\to MString$,
both complexification and realification appear in the definition of the class $r$.
However, the class $r$ does not factor through the map $BString\to BString$
which is given by the composition of complexification and realification --- this would be multiplication by 2
in the H--space $BString$, and so cannot produce a generator in cohomology.
\end{remark}

Next we will show that the reduced class $\tilde{r}=r-1$ defines a generator in the $TMF_1(3)$--cohomology of $BString$. 
\begin{thm}\label{EBString}
Let $\pi_i$ be the Pontryagin classes of Proposition \ref{equivariant Pontryagin}. Then the map 
$$ TMF_1(3)^*[[ \tilde{r}, \pi _1,\pi _2,\dots]]\longrightarrow TMF_1(3)^*BString$$
which sends the formal variables in the source to classes with the same names in the target is injective. 
Moreover,  it is  an isomorphism for the completed cohomology theory:
$$\widehat{TMF_1(3)}^*[[ \tilde{r}, \pi _1,\pi _2,\dots]]\cong \widehat{TMF_1(3)}^*(BString).$$

\end{thm}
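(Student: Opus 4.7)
The plan is to deduce both claims from the criterion of \cite[Theorem 1.3]{L13}, which states that any class $r'\in TMF_1(3)^0 BString$ whose restriction $i^*r'$ generates $\widehat{TMF_1(3)}^*K(\Z,3)$ as the additional free generator beyond what is visible from $BSpin$ serves, together with the Pontryagin classes $\pi_i$, to produce the completed isomorphism asserted in the theorem. Once this generating property is verified for our specific class $\tilde r = c^*r_U - 1$, the completed isomorphism follows; the uncompleted injectivity is then obtained by comparison with the completion.

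To verify the generating property, I use the diagram (\ref{cpj}) to rewrite
\[
i^*\tilde r = (c\circ i)^* r_U - 1 = (\cdot 2)^* j^* r_U - 1,
\]
reducing the problem to an analysis of $j^*r_U$ in $\widehat{TMF_1(3)}^*K(\Z,3)$, where $j\colon K(\Z,3)\to BU\langle 6\rangle$ is the bottom-cell inclusion. The key input is Proposition \ref{cubical expansion}, refined by the computation of Appendix \ref{Comp}: the coefficient of $x_0x_1x_2$ in the power series $f^*r_U$ equals $-(a_1a_2-3a_3)$, which modulo $(2,v_1)=(2,a_1)$ reduces to $a_3=v_2$. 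Since the bottom Chern class $c_3\in H^6 BU\langle 6\rangle$ pulls back along $f$ to $x_0x_1x_2$, and since $j^*c_3$ generates $H^6K(\Z,3)$, this computation pins down the leading behavior of $j^*\tilde r_U$ in bottom degree, showing that its bottom-degree coefficient modulo $(2,v_1)$ is $v_2$, a unit in $\widehat{TMF_1(3)}^*$.

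The principal obstacle is then to propagate this leading behavior through the doubling map $(\cdot 2)^*$. The action of $\cdot 2$ on the formal completion of the elliptic curve corresponds to the $[2]$-series of the elliptic formal group law, and the induced action on $\widehat{TMF_1(3)}^*K(\Z,3)$ is governed by the associated substitution on cubical structures. Because this substitution modifies the relevant bottom-degree coefficient only by a unit, the class $i^*\tilde r$ still generates $\widehat{TMF_1(3)}^*K(\Z,3)$ in the sense required by \cite[Theorem 1.3]{L13}, and the completed isomorphism follows at once.

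For the first claim, note that $TMF_1(3)^*$ is Noetherian and its $(2,v_1)$-adic topology is Hausdorff, so the canonical completion map $TMF_1(3)^*\to \widehat{TMF_1(3)}^*$ is injective, and likewise the induced map on formal power series rings. Since the map in question factors as
\[
TMF_1(3)^*[[\tilde r,\pi_1,\pi_2,\ldots]]\hookrightarrow \widehat{TMF_1(3)}^*[[\tilde r,\pi_1,\pi_2,\ldots]]\xrightarrow{\cong}\widehat{TMF_1(3)}^*BString,
\]
and further through $TMF_1(3)^*BString\hookrightarrow \widehat{TMF_1(3)}^*BString$, the original map into $TMF_1(3)^*BString$ is necessarily injective.
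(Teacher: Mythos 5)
The central step of your argument fails at the doubling map. You reduce to showing that $i^*\tilde r=(\cdot 2)^*j^*\tilde r_U$ generates, and you assert that the substitution induced by $(\cdot 2)^*$ ``modifies the relevant bottom-degree coefficient only by a unit.'' At the prime $2$ this is false: the relevant computation takes place in $K(2)^*K(\Z,3)\cong K(2)^*[[u]]$ after reducing modulo $(2,v_1)$, and there the map $(\cdot 2)^*$ is injective but \emph{not} surjective --- its image is a proper subalgebra (the height--$2$ analogue of $[2](x)\equiv v_1x^2+\dots$ killing the linear coefficient of a $K(1)$--generator). Consequently $(\cdot 2)^*$ of a topological generator is never again a topological generator, and $i^*\tilde r$ does \emph{not} generate $K(2)^*K(\Z,3)$; the criterion of \cite[Theorem 1.3]{L13} cannot be applied to $i^*\tilde r$ in the way you state it. The paper avoids exactly this trap: by \cite[Theorem 1.5]{MR2093483} the maps $i^*$ and $(\cdot 2)^*$ on $K(2)^*K(\Z,3)$ have the \emph{same} image and $(\cdot 2)^*$ is a monomorphism, so one gets a well-defined Hopf-algebra \emph{epimorphism} $p=((\cdot 2)^*)^{-1}\circ i^*\colon K(2)^*BString\to K(2)^*K(\Z,3)$ with $p(\tilde r)=j^*\tilde r_U$; the generation criterion is then applied to $p(\tilde r)$, i.e.\ directly to $j^*\tilde r_U$, and no analysis of the $[2]$--series is needed. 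Your proof needs this input (or an equivalent one) to go through.

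A secondary gap: you pass from the unit coefficient of $x_0x_1x_2$ in $f^*r_U\in K(2)^*(\CC\bP^\infty)^3$ to the statement that $j^*\tilde r_U$ generates $K(2)^*K(\Z,3)$ by invoking the integral Chern class $c_3$ and $H^6$. That inference is not automatic in Morava $K$--theory: one must relate the dual of $\beta_1^{\otimes 3}$ to the indecomposables of $K(2)^*\underline{BP\langle 1\rangle}_6$ and track which primitives die in $K(2)_*K(\Z,3)$; this is precisely the Hopf-ring computation of Lemma \ref{sulemma} (Ravenel--Wilson--Yagita), which your argument replaces by an ordinary-cohomology heuristic. Your final paragraph on deducing the uncompleted injectivity from the completed isomorphism matches the paper's argument and is fine.
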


Before proving the theorem consider the following commutative diagram, where everything is localized at the prime 2:
\begin{eqnarray}\label{cpbp1bu6}
&\xymatrix{
&(\CC\bP^\infty)^3
\ar[d]
& \\
K(\Z,3)
\ar[r]\ar[d]^=
&
 \underline{BP\langle 1\rangle}_6 
\ar[r]\ar[d]
&
 \underline{BP\langle 1\rangle}_4 
\ar[d]
\\
K(\Z,3)
\ar[r]^j
&
\underline{ku}_6=BU\langle 6 \rangle
\ar[r]
&
 \underline{ku}_4=BSU.
}
\end{eqnarray}
Here, the spaces in an $\Omega$--spectrum $E$ are denoted by $\underline{E}_n$.
The rows of the diagram are the obvious fibrations which arise from multiplication by $v_1$ on the spectra $BP\langle 1\rangle$ and $ku$.
The  inclusion $\CC\bP^\infty \to MU(1)\to \underline{MU}_2$ 
and the  ring  map $MU\to BP\to BP\langle 1 \rangle$ induce the complex coordinate 
$$\CC\bP^\infty \to \underline{BP\langle 1\rangle}_2.$$
The upper vertical map in the diagram is obtained by composing the threefold product of this coordinate with the multiplication maps
for the spaces $\underline{BP\langle 1\rangle}_*$.

The spectrum $ku$ is complex connective $K$--theory. It is complex oriented by $1-L$ with the canonical line bundle $L$ over $\CC\bP^\infty$. 
The $2$--typicalization of this orientation  factors through $MU\to BP\langle 1 \rangle$ which gives the lower vertical maps of the diagram. Note that the composition of the two vertical maps in the middle column coincide with the 
 2--typicalization of the map $f$ 
  which has been used in Equation \eqref{map f} for the identification with the cubical structures. The map and its typicalization coincide on the product $(\CC P^1)^3$ since they are products of Euler classes.

The $K(2)$--homology of $\CC\bP^\infty$ is a free module on canonical generators $\beta_i\in K(2)_*\CC\bP^\infty$ for $i\ge 0$, 
which are dual to the elements $x^i\in K(2)^*\CC\bP^\infty$.
We denote the image of $\beta_i$ in $K(2)_*\left(\underline{BP\langle 1\rangle}_2\right)$  by $b_i$, and we abbreviate $b_{(i)}=b_{2^i}$. With this notation we have just proved that the image of the class $\beta_1^{\otimes 3}\in K(2)_*(\CC\bP^\infty)^3$  under the vertical composite coincides with its image under the map $f$.

\begin{lemma}\label{sulemma} A class in $K(2)^*\underline{BP\langle 1\rangle}_6$ maps to a generator of $K(2)^*K(\Z,3)$
if its restriction to $(\CC\bP^\infty)^3$ pairs non-trivially with $\beta_1^{\otimes 3}\in K(2)_*(\CC\bP^\infty)^3$.
\end{lemma}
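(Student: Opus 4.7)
The natural strategy is duality via the projection formula. Suppose we can produce $\eta\in K(2)_*K(\Z,3)$ which is dual (up to a unit of $K(2)_*$) to the generator $u\in K(2)^*K(\Z,3)$ and which satisfies
\[
i_*\eta \;=\; f_*(\beta_1^{\otimes 3}) \qquad \text{in } K(2)_*\underline{BP\langle 1\rangle}_6,
\]
where $i:K(\Z,3)\hookrightarrow\underline{BP\langle 1\rangle}_6$ denotes the fibre inclusion of (\ref{cpbp1bu6}). Then for any $c\in K(2)^*\underline{BP\langle 1\rangle}_6$ the projection formula
\[
\langle i^*c,\eta\rangle \;=\; \langle c,i_*\eta\rangle \;=\; \langle c,f_*(\beta_1^{\otimes 3})\rangle \;=\; \langle f^*c,\beta_1^{\otimes 3}\rangle
\]
combined with the hypothesis forces $\langle i^*c,\eta\rangle\ne 0$, so $i^*c$ is a unit multiple of $u$, hence a generator.

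To produce $\eta$ and verify the displayed identity, I would invoke Ravenel--Wilson's Hopf-ring descriptions of $K(2)_*\underline{BP\langle 1\rangle}_*$ and of $K(2)_*\underline{H\Z}_*=K(2)_*K(\Z,*)$. The map $f$ is by construction the threefold star product of the canonical orientation $\CC\bP^\infty\to \underline{BP\langle 1\rangle}_2$, so
\[
f_*(\beta_1^{\otimes 3}) \;=\; b_1\circ b_1\circ b_1
\]
in the Hopf ring $K(2)_*\underline{BP\langle 1\rangle}_*$. The inclusion $i$ arises from the cofibre sequence $\Sigma^2 BP\langle 1\rangle\xrightarrow{v_1} BP\langle 1\rangle\to H\Z$ and is an infinite-loop-space map, so it respects both the cartesian and star products of the Hopf rings. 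Naturality should then identify $b_1\circ b_1\circ b_1$ as the image under $i_*$ of the corresponding threefold star product on the $K(2)_*K(\Z,*)$ side, restricted to the component $K(2)_*K(\Z,3)$ via the cofibre sequence, and this class provides the desired $\eta$.

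The principal obstacle is verifying that the element of $K(2)_*K(\Z,3)$ so produced is indeed dual to a generator of $K(2)^*K(\Z,3)$ --- rather than to a decomposable class or one killed in the Atiyah--Hirzebruch spectral sequence. This requires a careful match of Ravenel--Wilson generators and relations across the cofibre sequence: the $v_1$-multiplication on the Hopf-ring side must be tracked through the transgression of the fibration $K(\Z,3)\to\underline{BP\langle 1\rangle}_6\to\underline{BP\langle 1\rangle}_4$, and one must check that the relevant star product lands in the subalgebra of $K(2)_*\underline{BP\langle 1\rangle}_*$ coming from the fibre. Once this identification is secured, the projection-formula computation above closes the argument.
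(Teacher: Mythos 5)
Your overall strategy --- dualize via the Kronecker pairing, pass to the Hopf ring $K(2)_*\underline{BP\langle 1\rangle}_*$, identify $f_*(\beta_1^{\otimes 3})$ with the threefold $\circ$--product $b_{(0)}^{\circ 3}=b_1^{\circ 3}$ (note: the $\circ$--product induced by the ring multiplication, not the ``star product'' as you write at one point), and reduce the lemma to showing that this class is $i_*$ of a primitive dual to the generator of $K(2)^*K(\Z,3)$ --- is exactly the paper's. But what you label ``the principal obstacle'' is not a technical loose end to be deferred; it is the entire content of the lemma, and your proposal does not contain the argument that closes it. In particular, ``naturality'' cannot identify $b_{(0)}^{\circ 3}$ as $i_*$ of anything: the classes $b_{(0)}$ live on $\underline{BP\langle 1\rangle}_2$, and the comparison map to $\underline{H\Z}_2=\CC\bP^\infty$ points \emph{away} from the fibre, so there is no ``corresponding threefold product on the $K(\Z,*)$ side'' available to push forward along $i$.

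What actually produces your $\eta$ is the following chain, which is the body of the paper's proof. By Ravenel--Wilson--Yagita the fibration $K(\Z,3)\to\underline{BP\langle 1\rangle}_6\to\underline{BP\langle 1\rangle}_4$ induces a short exact sequence of bicommutative Hopf algebras in $K(2)$--(co)homology, all three free, so one computes the map $K(2)_*\underline{BP\langle 1\rangle}_6\to K(2)_*\underline{BP\langle 1\rangle}_4$ on primitives and reads off the primitives of the fibre as its kernel. That map is $\circ$--multiplication by $[v_1]$, and Ravenel--Wilson give explicit bases of the primitives on both sides. The decisive input is the pair of relations obtained by comparing the two--series of the formal group laws of $K(2)$ and $BP\langle 1\rangle$ through the canonical isomorphism $b(s)$: setting $s=t$ in $b(s+_{K(2)}t)=b(s)+_{[BP\langle 1\rangle]}b(t)$ gives $[v_1]\circ b_{(0)}^{\circ 2}=0$ and $v_2^{2^i}b_{(i)}=[v_1]\circ b_{(i+1)}^{\circ 2}$. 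These show that every basis primitive of $K(2)_*\underline{BP\langle 1\rangle}_6$ \emph{except} $b_{(0)}^{\circ 3}$ maps to a unit multiple of a basis primitive of $K(2)_*\underline{BP\langle 1\rangle}_4$, while $b_{(0)}^{\circ 3}$ maps to zero. By exactness $b_{(0)}^{\circ 3}$ therefore spans the rank--one module of primitives coming from $K(\Z,3)$, i.e.\ it is (a unit multiple of) the class dual to the generator of the power series algebra $K(2)^*K(\Z,3)$; dualizing then gives the lemma. Until these relations are established, your $\eta$ has not been constructed and the projection--formula step has nothing to act on, so the proof is incomplete as it stands.
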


The following proof is based on work by Ravenel--Wilson \cite{MR0448337} and Ravenel--Wilson--Yagita \cite{MR1648284} and is described in Section 8.5 of the latter. It appears in similar form in Su's PhD thesis  \cite{MR2709571}.

\begin{proof}
For a ring spectrum there are addition maps $$\underline{E}_m\times \underline{E}_m\to \underline{E}_{m}$$ and multiplication 
maps $$\underline{E}_m\times \underline{E}_n\to \underline{E}_{m+n}.$$ The direct sum $K(2)_*(\underline{E}_*)$ over all homological degrees 
and all spaces in the spectrum is thus equipped with three operations: the usual addition $+$, 
an operation $*$ induced by the addition maps for the $\underline{E}_*$,
and an operation $\circ$ induced by the multiplication maps for the $\underline{E}_*$.
Ravenel--Wilson call such a structure a Hopf ring. 

The image of $\beta_1^{\otimes 3}$ in $K(2)_*\underline{BP\langle 1\rangle}_6$ is $b_{(0)}^{\circ 3}$, so it suffices to show that a class is a generator
if it pairs non-trivially with $b_{(0)}^{\circ 3}$.

By  \cite{MR1648284} the middle row of Diagram \eqref{cpbp1bu6} gives rise to a short exact sequence of bicommutative 
Hopf algebras $$K(2)^*( \underline{BP\langle 1\rangle}_4 ) \to K(2)^*( \underline{BP\langle 1\rangle}_6 ) \to K(2)^*(K(\Z,3) ) ,$$
and all three algebras are isomorphic to power series algebras on the indecomposables.
We want to compute the first map on indecomposables. We dualize and compute the induced map in homology 
on primitives. 

The homotopy of the spectrum $BP\langle 1 \rangle$ contains elements $v_1^k$ for $k\ge 0$.
Using the Hurewicz map, we have corresponding elements $[v_1^k]=[v_1]^{\circ k}\in K(2)_0(\underline{BP\langle 1\rangle}_{-2k})$. 
The map $K(2)_*( \underline{BP\langle 1\rangle}_6 ) \to K(2)_*( \underline{BP\langle 1\rangle}_4 )$ is $\circ$--multiplication with $[v_1]$.

From \cite[Theorem 5.3]{MR0448337}, it follows that the primitives of $K(2)_*( \underline{BP\langle 1\rangle}_4 )$ form the free $K(2)_*$--module with basis elements
$$[v_1^{k-1}]\circ b_{(j_1)}\circ b_{(j_2)} \circ \dots \circ b_{(j_k)} \circ b_{(0)},\qquad\text{where $k\ge 1$ and $0\le j_1 < j_2< \dots < j_k$.}$$ 

The primitives of $K(2)_*( \underline{BP\langle 1\rangle}_6 )$ form the free $K(2)_*$--module with basis elements
$$[v_1^{k-2}]\circ b_{(j_1)}\circ b_{(j_2)} \circ \dots \circ b_{(j_k)} \circ b_{(0)},\qquad
\text{where $k\ge 2$ and $0\le j_1 < j_2< \dots < j_k$,}$$
and 
$b_{(i)}^{\circ 2}\circ b_{(0)}$ for $i\ge 0$.

The spectrum $K(2)\wedge BP\langle 1 \rangle$ has two formal group laws, one inherited from each factor,
and a canonical isomorphism between them. We need to see what happens with this structure when we replace the spectrum $BP\langle 1 \rangle$ by 
the spaces $\underline{BP\langle 1\rangle}_*$.

We extend the operations $\circ $ and $*$ to power series over the Hopf ring $K(2)_*(\underline{BP\langle 1\rangle}_*)$ in a variable $t$ by defining 
\begin{align*}
at^i\circ bt^j &=(a\circ b)t^{i+j},\\ 
at^i * bt^i &= (a*b)t^i\text{, and }\\
at^i * bt^j&=at^i+ bt^j\text{ for $i\ne j$}
\end{align*}
for $a,b\in K(2)_*(\underline{BP\langle 1\rangle}_*)$.
Similarly for power series rings in several variables.

We can consider elements of $K(2)_*$ as elements of our Hopf ring $K(2)_*(\underline{BP\langle 1\rangle}_*)$
by multiplying them with $1\in K(2)_0(\underline{BP\langle 1\rangle}_0)$. Hence
the formal group law from $K(2)$ gives a power series $$s+_{K(2)}t\in K(2)_*(\underline{BP\langle 1\rangle}_*)[[s,t]].$$ 
The second formal group law $$s+_{BP\langle 1 \rangle} t =\sum_{i,j}a_{ij}s^it^j\in BP\langle 1\rangle_*[[s,t]]$$ induces an operation 
$$s+_{[BP\langle 1\rangle]}t=\mfgl\limits_{i,j\ge 0} [a_{ij}] \circ s^{\circ i}t^{\circ j}$$
on power series over our Hopf ring.
Finally the isomorphism gives a series $b(s)=\sum b_is^i$ which satisfies the equality \cite[Theorem 3.9]{MR0448337}: 
$$b(s+_{K(2)}t)=b(s)+_{[BP\langle 1\rangle]}b(t).$$
Setting $s=t$ in the last equation, the well-known two--series of the group laws for $K(2)$ and $BP\langle 1 \rangle$ give us
$\sum_q b_qv_2^qt^{4q}=  [v_1]b(t)^{\circ 2}$ and so $v_2^{2^{i}}b_{(i)}=[v_1]b_{(i+1)}^{\circ 2}$ for all $i\ge 0$, as well as $0=[v_1]b_{(0)}^{\circ 2}$.

We can now see explicitly the map $K(2)_*( \underline{BP\langle 1\rangle}_6 ) \to K(2)_*( \underline{BP\langle 1\rangle}_4 )$ 
on the generators for the primitives:
The generators
$[v_1^{k-2}]\circ b_{(j_1)}\circ b_{(j_2)} \circ \dots \circ b_{(j_k)} \circ b_{(0)}$ are mapped to the generators  
$[v_1^{k-1}]\circ b_{(j_1)}\circ b_{(j_2)} \circ \dots \circ b_{(j_k)} \circ b_{(0)}$ with $k\ge 2$.
The generators $b_{(i+1)}^{\circ 2}\circ b_{(0)}$ are mapped to $v_2^{2^{i}}b_{(i)} b_{(0)}$, i.e. a product of an invertible element and of one of the remaining generators
$[v_1^{k-1}]\circ b_{(j_1)}\circ b_{(j_2)} \circ \dots \circ b_{(j_k)} \circ b_{(0)}$ with $k=1$.
Finally the last generator $b_{(0)}^{\circ 3}$ is mapped to zero.

By dualizing, we see that a class in $K(2)^*( \underline{BP\langle 1\rangle}_6 )$ is mapped to a generator of the indecomposables for $K(2)^*K(\Z,3)$
if it pairs non-trivially with the primitive $b_{(0)}^{\circ 3}$. \end{proof}

\begin{proof}[Proof of Theorem \ref{EBString}]
We denote the reduction of $r$  by $(2,v_1)$  by the same name ${r}\in K(2)^*BString$.
By \cite[Theorem 1.5]{MR2093483} there is an epimorphism of Hopf algebras 
$$p:K(2)^*BString\to K(2)^*K(\Z,3)$$
which arises from the diagram $$K(2)^*BString\stackrel{i^*}\to K(2)^*K(\Z,3)\stackrel{ (\cdot 2)^*}\leftarrow K(2)^*K(\Z,3)$$
as both maps have the same image and the right map is a monomorphism.
Since ${r}=c^*{r}_U$, we have with Diagram \eqref{cpj} $$p({r})=j^*{r}_U.$$

In \cite[Section 12]{MR584466} Ravenel and Wilson prove that the Hopf algebra $K(2)_*K(\Z,3)$ is a divided power algebra, 
and the dual Hopf algebra $K(2)^*K(\Z,3)$ is a power series algebra on one generator. 
By \cite[Theorem 1.3]{L13} it is enough to show that $p(\tilde{r})$ is a free topological generator for 
$K(2)^*K(\Z,3)$, that is, $$K(2)^*K(\Z,3)\cong K(2)^*[[p(\tilde{r})]].$$

The image of $r_U$ in $K(2)^*(\CC\bP^\infty)^3$ has an invertible coefficient in front of $x_0x_1x_2$ by Proposition \ref{cubical expansion}.
Hence Lemma \ref{sulemma} shows the claim about the $\widehat{TMF_1(3)}$--cohomology of $BString$. 

For the $TMF_1(3)$--cohomology we see that the composition
$$ TMF_1(3)^*[[ \tilde{r}, \pi _1,\pi _2,\dots]]\longrightarrow TMF_1(3)^*BString\to \widehat{TMF_1(3)}^*BString$$
is injective, so the first map must be injective.
\end{proof}

\section{The $TMF_0(3)$--cohomology of $BString$} 

In this section we show that the class $r_U\in TMF_1(3)^*BU\langle 6\rangle$ lifts to an equivariant cohomology class.
As we will see below, the involution on the classifying space $BU$ induces an involution on $BU\langle 6\rangle$, and we denote the resulting $\Z/2$--space by $\B\U\langle 6\rangle$. We show that $r_U$ lifts to an equivariant map $\B\U \langle 6\rangle\to \TM$. 
Restricting this map to the fixed points in both source and target, we conclude that the class ${r}$ comes from a class in $TMF_0(3)^*BString$.
This finally allows us to prove Theorem  \ref{main}.

To make the above induced involution on $BU\langle 6\rangle$ precise consider the first stages of the equivariant Whitehead tower of $\B\U$. 
For a Mackey functor $M$, write $K(M,V)$ for the Eilenberg--MacLane space, that is, the $V$th space in the $\Omega$--spectrum of $HM$ (see \cite[2.4]{MR2240234} for details).
Let $\B\U \langle 4\rangle= \B\S\U$ be the homotopy fibre of the first Real Chern class  (ibid.\ Section 5)
$$c_1: \B\U \to K(\underline{\Z}, 1+\alpha),$$ let $\B\U\langle 6\rangle$ be the fibre of 
$$c_2: \B\U \langle 4\rangle \to K(\underline{\Z}, 2(1+\alpha)),$$ and let $\B\U\langle 8\rangle $ be the fibre of
$$c_3: \B\U \langle 6\rangle \to K(\underline{\Z}, 3(1+\alpha)).$$

Note that non-equivariantly $\B\U \langle 6\rangle $ coincides with $BU \langle 6\rangle $ and the first five homotopy groups vanish. 
After taking $\Z/2$--fixed points, the spaces $\B\U\langle 2k\rangle^{\Z/2}$ are $(k-1)$--connected (see ibid.\ Proposition 3.6(a)). We have $\B\U^{\Z/2}=BO$, $\B\S\U^{\Z/2}=BSO$ and fibrations (see ibid.\ Corollary 2.12)
\begin{align*}
\B\S\U^{\Z/2} &\to&  \B\U^{\Z/2} &\to K(\underline{\Z}, 1+\alpha)^{\Z/2}=K(\Z/2,1)\\
\B\U \langle 6\rangle^{\Z/2}& \to& \B\S\U^{\Z/2}& \to K(\underline{\Z}, 2(1+\alpha))^{\Z/2}=K(\Z/2,2)\times K(\Z,4)\\
\B\U \langle 8\rangle^{\Z/2} &\to& \B\U \langle 6\rangle^{\Z/2} &\to K(\underline{\Z}, 3(1+\alpha))^{\Z/2}=K(\Z/2,3)\times K(\Z/2,5).
\end{align*}
\begin{lemma}
 $\B\U \langle 6\rangle^{\Z/2}$ is the homotopy fiber of $p_1: BSpin\to K(\Z,4)$.
 \end{lemma}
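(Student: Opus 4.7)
The plan is to identify the classifying map $f \colon BSO \to K(\Z/2,2)\times K(\Z,4)$ in the fixed-point fibration
$$\B\U\langle 6\rangle^{\Z/2} \to \B\S\U^{\Z/2}=BSO \to K(\underline{\Z}, 2(1+\alpha))^{\Z/2}=K(\Z/2,2)\times K(\Z,4)$$
established in the excerpt with $(w_2, -p_1)$. Granting this, the homotopy fiber of $w_2 \colon BSO \to K(\Z/2,2)$ is $BSpin$, so the homotopy fiber of $f$ agrees with the homotopy fiber of $-p_1 \colon BSpin \to K(\Z,4)$. The latter coincides with the homotopy fiber of $p_1$ itself via the self-equivalence of $K(\Z,4)$ induced by negation on $\pi_4$, completing the proof.

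The first coordinate of $f$ is identified by a connectivity argument. Since $\B\U\langle 6\rangle^{\Z/2}$ is $2$-connected by the cited Proposition 3.6(a), the long exact sequence of the fibration degenerates at $\pi_2$ to a short exact sequence $0 \to \Z/2 \to \Z/2 \to 0$, forcing the first coordinate of $f$ to induce an isomorphism on $\pi_2$. Since $H^2(BSO;\Z/2) = \Z/2$ is generated by $w_2$, the first coordinate of $f$ must equal $w_2$.

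For the second coordinate I would compare with the underlying non-equivariant theory. Under the product decomposition of $K(\underline{\Z},2(1+\alpha))^{\Z/2}$, the projection to the $K(\Z,4)$ summand corresponds to the top-degree integer cohomology class determined by the underlying forgetful functor $i^*$. Non-equivariantly, $i^*$ sends the Real Chern class $c_2 \colon \B\S\U \to K(\underline{\Z},2(1+\alpha))$ to the ordinary second Chern class $c_2 \colon BSU \to K(\Z,4)$, and the inclusion of fixed points $BSO \hookrightarrow BSU$ is the complexification map. Composing gives the pullback of $c_2$ along complexification, which by the standard identity $p_k(E)=(-1)^k c_{2k}(E\otimes_{\R}\CC)$ represents $-p_1 \in H^4(BSO;\Z)$, as required.

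The main obstacle is the third step: rigorously pinning down that the $K(\Z,4)$ projection of $K(\underline{\Z},2(1+\alpha))^{\Z/2}$ corresponds to the non-equivariant underlying Chern class and that the inclusion $\B\S\U^{\Z/2} \hookrightarrow \B\S\U$ on the underlying space is precisely the complexification $BSO \to BSU$. Both facts should be immediate from the Bredon-cohomology computation behind the splitting of $K(\underline{\Z},2(1+\alpha))^{\Z/2}$ and from Kriz's description of $\B\S\U$, but the comparison between equivariant and non-equivariant structures needs to be tracked explicitly to rule out unwanted factors.
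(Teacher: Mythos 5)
Your identification of the first coordinate as $w_2$ via the $2$--connectivity of $\B\U\langle 6\rangle^{\Z/2}$ is exactly the paper's argument. For the second coordinate, however, the paper takes a different and more economical route: it uses the \emph{next} stage of the equivariant Whitehead tower. Since $\B\U\langle 8\rangle^{\Z/2}$ is $3$--connected, the third fibration gives $\pi_3(\B\U\langle 6\rangle^{\Z/2})\cong\pi_3(K(\Z/2,3))=\Z/2$; feeding this into the long exact sequence of the second fibration forces the composite $BSO\to K(\Z,4)$ to induce multiplication by $\pm 2$ on $\pi_4(BSO)\cong\Z$, hence to equal $\pm p_1$ in $H^4(BSO;\Z)\cong\Z\cdot p_1$. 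This avoids any analysis of how the splitting $K(\underline{\Z},2(1+\alpha))^{\Z/2}\simeq K(\Z/2,2)\times K(\Z,4)$ interacts with the forgetful functor.

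Your route instead hinges on the claim that the projection onto the $K(\Z,4)$ factor agrees with the underlying non-equivariant class, and you rightly flag this as the main obstacle; as written it is a genuine gap, since a priori the underlying map $K(\Z/2,2)\times K(\Z,4)=K(\underline{\Z},2(1+\alpha))^{\Z/2}\to i^*K(\underline{\Z},2(1+\alpha))\simeq K(\Z,4)$ could be multiplication by some $d\neq\pm 1$ on $\pi_4$ and could have a nontrivial component on the $K(\Z/2,2)$ factor. The gap is cheaper to close than you suggest, though: any component $K(\Z/2,2)\to K(\Z,4)$ is a torsion class in $H^4(K(\Z/2,2);\Z)$ and therefore lands in the torsion-free group $H^4(BSO;\Z)$ as zero; so if the second coordinate is $n\,p_1$, its forgetful image is $d\,n\,p_1$ for a fixed integer $d$, and the identity $d\,n\,p_1=c_2(\xi\otimes_{\R}\CC)=-p_1(\xi)$ forces $n=\pm 1$ without ever determining $d$. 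Since the homotopy fibers of $p_1$ and $-p_1$ agree, this completes your argument; but as proposed it defers the essential point, whereas the paper's $\pi_3$--computation settles it directly.
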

 \begin{proof}
Since $\B\U \langle 6\rangle^{\Z/2}$ is 2--connected, the map $BSO\to K(\Z/2,2)$ in the second fibration is the second Stiefel--Whitney class $w_2$, and since 
in the third line $\B\U \langle 6\rangle^{\Z/2}\to K(\Z/2,3)$ is an isomorphism on $\pi_3$, the map $BSO\to K(\Z,4)$ in the second line is $p_1$.
\end{proof}
It follows that there is a map which fits into the diagram
$$\xymatrix{BString \ar@{-->}[r]\ar[d] &  \B\U \langle 6\rangle^{\Z/2}\ar[r]\ar[d]& BU \langle 6\rangle\ar[d]\\ BSO\ar[r]^\cong& \B\S\U ^{\Z/2}\ar[r]& BSU.}$$
Since the composite of the lower horizontal maps is the complexification map so is the upper one. Hence, if we equip $BString$ with the trivial $\Z/2$--action, we have an equivariant map $$BString\to \B\U \langle 6\rangle$$ which non-equivariantly is the complexification.

\begin{lemma} \label{MUBP}
The forgetful map
\begin{eqnarray*}
\E ^{k(1+\alpha)} \B \U \langle 6\rangle & \stackrel{}{\lra} &E^{2k}BU  \langle 6\rangle
\end{eqnarray*}
is an isomorphism for every $\bbM \U$--module spectrum $\E$ such that $\E\cong c{\E }$ and $\E ^{*(1+\alpha)}\cong E^{2*}$. In particular this applies to $\E=\TM$.
\end{lemma}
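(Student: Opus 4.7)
The plan is to use that $\E$, being an $\bbM\U$-module spectrum, is Real complex orientable. By Hu--Kriz, $\bbM\U$ carries Real Chern classes $\iota(c_i)\in\bbM\U^{i(1+\alpha)}\B\U$ whose underlying classes are the ordinary Chern classes, so $\E$ inherits the identifications
$$\E^\star\B\U\;\cong\;\E^\star[[\iota(c_1),\iota(c_2),\ldots]]\quad\text{vs.}\quad E^*BU\;\cong\;E^*[[c_1,c_2,\ldots]],$$
and the forgetful map sends $\iota(c_i)\mapsto c_i$. Restricting the left side to bidegree $k(1+\alpha)$ decomposes it as a product $\prod_{|m|\leq k}\E^{(k-|m|)(1+\alpha)}\cdot m$ indexed by monomials $m$ in the Chern classes, which matches the weight-$k$ part $\prod_{|m|\leq k}E^{2(k-|m|)}\cdot m$ of $E^{2k}BU$ term-by-term via the coefficient hypothesis $\E^{*(1+\alpha)}\cong E^{2*}$. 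So the lemma holds for $\B\U$.

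To pass to $\B\U\langle 6\rangle$ I would use the equivariant Whitehead tower
$$\B\S\U\to\B\U\to K(\underline{\Z},1+\alpha),\qquad\B\U\langle 6\rangle\to\B\S\U\to K(\underline{\Z},2(1+\alpha)),$$
together with the associated $\Z/2$-equivariant Serre spectral sequences, which are available because $\E\cong c\E$. Comparing via the forgetful functor with the classical Serre spectral sequences for the non-equivariant tower $BU\langle 6\rangle\to BSU\to BU$, the previous step gives the iso on the $\B\U$ factor, while an analogous iso for the Eilenberg--MacLane factors supplies the rest. Being iso on $E_2$ in bidegree $k(1+\alpha)\leftrightarrow 2k$, the comparison propagates to $E_\infty$ and yields the desired iso.

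The main obstacle will be the analog of the lemma for the equivariant Eilenberg--MacLane spaces, namely $\E^{k(1+\alpha)}K(\underline{\Z},n(1+\alpha))\cong E^{2k}K(\Z,2n)$ for $n=1,2$. One would attack this by constructing equivariant CW models of these spaces using only Real cells of type $D^{n(1+\alpha)}$, so that the equivariant AHSS in bidegree $k(1+\alpha)$ reduces term-by-term to the ordinary AHSS in degree $2k$ via the coefficient hypothesis. With this technical input in hand, the whole argument becomes a formal consequence of Real complex orientability combined with Borel completeness.
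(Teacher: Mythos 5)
Your first step, for $\B\U$ itself, is fine: Real orientability of $\bbM\U$--modules gives $\E^\star\B\U\cong\E^\star[[\iota(c_1),\iota(c_2),\dots]]$ with the $\iota(c_i)$ in degrees $i(1+\alpha)$, and the coefficient hypothesis then matches bidegree $k(1+\alpha)$ with degree $2k$. The gap is in the descent to $\B\U\langle 6\rangle$, and it is exactly where the whole difficulty of the lemma lives. First, a structural problem: in the fibrations you quote, $\B\U\langle 6\rangle$ is the \emph{fibre}, and the Serre spectral sequence computes the cohomology of the total space from that of base and fibre, not the other way around; to reach the fibre you would have to loop the Eilenberg--MacLane base down to $K(\underline{\Z},1+2\alpha)$ (or invoke Eilenberg--Moore), and in either case the relevant degrees are no longer multiples of $1+\alpha$, so the term-by-term matching collapses. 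Second, and more fatally, the ``technical input'' you defer to cannot be supplied by the method you propose: a $\Z/2$--CW model of $K(\underline{\Z},2(1+\alpha))$ built only from Real cells $D^{n(1+\alpha)}$ would, after forgetting the action, give $K(\Z,4)$ a CW structure with only even cells, which is impossible since $H^7(K(\Z,4);\Z)\cong\Z/2$. Indeed the $MU$-- and $K(n)$--cohomology of $K(\Z,2n)$ for $n\ge 2$ is not even or free (Ravenel--Wilson--Yagita), so no formal comparison of Atiyah--Hirzebruch or Serre spectral sequences over these bases can work.

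The paper avoids Eilenberg--MacLane spaces entirely. It uses the Ando--Hopkins--Strickland fact that $H_*BU\langle 6\rangle$ is generated as a ring by the image of the equivariant map $f:(\CC\bP^\infty)^3\to\B\U\langle 6\rangle$, extends $f$ to $Z=\Omega\Sigma(\CC\bP^\infty)^3$, and uses the James splitting of $Z$ into smash powers of $(\CC\bP^\infty)^3$ --- spaces which genuinely \emph{do} have Real cell structures --- to show $\bbM\U\wedge Z$ is a wedge of $(1+\alpha)$--suspensions of $\bbM\U$. Picking off a subwedge mapping to $\B\U\langle 6\rangle\wedge\bbM\U$ by a non-equivariant equivalence and smashing with $E\Z/2_+$ (this is where $\E\cong c\E$ is essential, since the splitting is only a non-equivariant equivalence) yields $\E^\star\B\U\langle 6\rangle\cong\E^\star\langle\!\langle\gamma_1,\gamma_2,\dots\rangle\!\rangle$ with all generators in degrees $\beta_i(1+\alpha)$, from which the lemma follows. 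You would need to replace your Whitehead-tower descent by an argument of this kind.
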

\begin{proof} 
Since the spectrum $\E$ is suitably complete, we can replace it with $c{\E }=F(E\Z/2_+,\E )$. 
Now the argument follows \cite[Theorem 2.3]{MR3044602} and we repeat it here: set $X= (\CC P^\infty)^{3}$, $Z=\Omega \Sigma X$ and $Y=\B\U \langle 6\rangle$. 
Note that $Y$ is a space in the Real connective $K$-theory spectrum, so there exists a $\Z/2$-space $Y'$ such that $Y=\Omega Y'$ equivariantly. 
Recall the map $f:X\to Y$ from Equation \eqref{map f}.
It is equivariant and the ordinary homology of $Y$ is generated as an algebra by its image, see \cite[Theorem 2.9]{ MR1869850}. Thus the equivariant map from $Z$ to $Y$ obtained from $f$ is surjective in ordinary homology and, by the Atiyah--Hirzebruch spectral sequence, split surjective in $MU$--homology. 

The $\Z/2$--space $Z$ admits a James filtration and hence equivariantly splits into a wedge of spectra of the form $X^k$ \cite[Theorem 6.2]{MR2607411}.
The latter space has an equivariant cellular decomposition into Real cells, i.e. disks in some $\mathbb C^n$ with complex conjugation as the involution. The
spectral sequence for ${\bbM \U}_\star Z$ corresponding to this cellular filtration collapses, and we see that 
$${\bbM \U} \wedge Z \cong \bigvee \Sigma^{k_i(1+\alpha)}{\bbM \U} .$$
Since we may assume that the connectivities of the summands increase it is a free ${\bbM \U}$--module of finite type.
Choose a subsequence $\beta_1,\beta_2,\ldots $ of $k_1, k_2,\ldots$ so as to obtain an equivariant map
$$ \bigvee \Sigma^{\beta_i(1+\alpha)} {\bbM \U} \longrightarrow Y\wedge {\bbM \U} $$
which is a non-equivariant homotopy equivalence. This gives an equivariant equivalence of ${\bbM \U} $--module spectra
$$ \bigvee \Sigma^{\beta_i(1+\alpha)} E\Z /2_+\wedge {\bbM \U} \longrightarrow E\Z /2_+\wedge Y\wedge {\bbM \U} .$$
Let $F$ (and $F_{\bbM \U}$) be the equivariant function spectrum (resp.\ the ${\bbM \U}$--module function spectrum). Then we have the equivalences   
\begin{eqnarray*}
\E ^{\star}Y& \cong &c{\E }^{\star}Y \cong \E ^{\star}(Y\wedge E\Z/2_+)\\
& \cong & 
F(Y\wedge E\Z/2_+, \E )^\star \cong   F_{\bbM \U}(Y\wedge E\Z/2_+ \wedge {\bbM \U} , \E )^\star \\
& \cong & 
F_{\bbM \U}\left( \bigvee \Sigma^{\beta_i(1+\alpha)} E\Z /2_+\wedge {\bbM \U} , \E \right)^\star \cong \prod 
F_{\bbM \U}( E\Z /2_+\wedge {\bbM \U} , \E )^{\star -\beta_i(1+\alpha)}
\\
&\cong& \prod 
F ( E\Z /2_+ , \E )^{\star -\beta_i(1+\alpha)} \cong  \prod \E ^{\star -\beta_i(1+\alpha)} \cong
 \E ^{\star}\langle \!\langle \gamma_1, \gamma_2, \ldots \rangle \! \rangle 
\end{eqnarray*}
with generators $\gamma_i$ of degree $\beta_i(1+\alpha)$ which also freely generate the non-equivariant cohomology. 

The fact that $\TM ^{*(1+\alpha)}\cong TMF_1(3)^{2*}$ follows easily from our description of the Borel spectral sequence below Lemma \ref{SCT}.
\end{proof}

\begin{proof}[Proof of Theorem \ref{main}. ]
Set $E=TMF_1(3)$. Consider the function $\Z/2$--spectra $F(?, \E)$ whose fixed points
$F(?, \E)^{\Z/2}$ are the spectra of equivariant functions. For the trivial $\Z/2$--space $BString$, the latter is equivalent to the function spectrum $F(BString, \E^{\Z/2})$ over the trivial universe.
By Lemma \ref{SCT} we have a strong completion theorem for $\E$ and the homotopy fixed point spectral sequence converges to the homotopy of the fixed point spectrum:
\begin{align*} 
E_2&=H^*(\Z/2,E^*(BString)) &\;\Rightarrow \quad & \pi_* F(BString, \E)^{\Z/2} \cong (\E^{\Z/2})^*(BString)\end{align*}
The class
 $r_U:BU\langle 6\rangle \to E$ is represented by an equivariant map $B\U\langle 6\rangle \to \E$ by Lemma \ref{MUBP}.  Taking fixed points   and composing with the map from $BString$  we have a map  $r:BString\to \E^{\Z/2}$.
It follows that $r\in E_2^{0,*}$ is a permanent cycle. 
The multiplicativity of $r$ under direct sums follows from the multiplicativity of the Thom classes $\sigma$ and $x$ respectively.

For the calculation of the $\hat{E}^{h\Z/2}=\widehat{TMF_0(3)}$--cohomology of $BString$, we can employ the homotopy fixed point spectral sequence
$$E_1^{s,t}=C^s(\Z/2, \hat{E}^{t}BString)\Rightarrow (\hat{E}^{h\Z/2})^*BString.$$
We have $$\hat{E}^*BString\cong\hat{E}^*[[\tilde{r},\pi _1,\pi _2,\dots]],$$ and $\Z/2$ acts by $a_i\mapsto -a_i, {r}\mapsto {r}, \pi _i\mapsto \pi _i$.

We know that $\tilde{r}$ is a permanent cycle and so are the Pontryagin classes (by the naturality of the spectral sequence). Hence the spectral sequence converges to 
$$ \hat{E}^{h\Z/2}{}^*[\![ \tilde{r}, \pi _1,\pi _2,\ldots]\!] $$
 and the second part of  Theorem \ref{main} is proven. 
\par
It remains to show the displayed formula for the character of the characteristic class $r$. 
The elliptic character at the cusp $\infty$ is the composite 
$$TMF_0(3)\to TMF_1(3)\to K[\zeta_3] (\!(q)\!).$$
It coincides on homotopy groups with the $q$--expansion map at $\infty$ for modular forms. 

It is known by  \cite[Appendix I, 5.3 and 6.4]{MR1189136} that 
for bundles with vanishing first Pontryagin class, we may replace (in terms of $q$--expanded characters with $q=e^{2\pi i \tau}$) the function $\sigma(\tau,z)$ by the Weierstrass $\Phi$--function $\Phi(\tau,z)$.
Moreover, the complex orientation $MU\langle 6\rangle \to MU \to E$
corresponds to the elliptic genus of level $\Gamma_1(3)$, that is, to the function $$x(z)=\frac{\Phi(\tau,z)\Phi(\tau,-\omega)}{\Phi(\tau,z-\omega)}$$
where $\omega=2\pi i /3$.
Hence, we have the commutative diagram
$$\xymatrix{ BString\ar[d]^r\ar[r]& BU\langle 6\rangle\ar[d] ^{r_U=\frac\sigma x}\ar[r]& BU \ar[d]^\psi \\
E^{h\Z/2} \ar[r]&E\ar[r]& H\CC[v^\pm] (\!(q)\!)^*}
$$
for which  the last vertical arrow with target  the Eilenberg--MacLane spectrum with coefficients  in the periodic ring of complex Laurent series in the formal variable $q=e^{2\pi i\tau}$ is given by the formula
$$\psi(z)=\frac{\Phi(\tau,-\omega)}{\Phi(\tau,z-\omega)}.$$

\end{proof}

\appendix
\section{The invariance of the Pontryagin classes and the difference class} \label{GeomInvariance}
We would like to give a geometric proof for the invariance of the Pontryagin classes  and the difference class.

An isomorphism of Weierstrass curves over a ring $R$ with projective coordinates $[X:Y:Z]$ and $[X':Y':Z']$ respectively is in general given by
$$X'=u^2X+rZ,\quad Y'=su^2X+u^3Y+tZ,\quad Z'=Z,$$ 
where $u\in R^\times, r,s,t\in R$.
We denote by $x$ respectively $x'$ the coordinates on the corresponding formal groups given by the restrictions of the functions $\frac{X}{Y}$ and $\frac{X'}{Y'}$ respectively. On the formal group, the function $z=\frac ZY$ becomes a power series in the coordinate $x$.
This implies $$x'=\frac{u^{-1}x+ru^{-3}z}{1+tu^{-3}z+su^{-1}x}=g(x),$$
where $g$ is a power series in one variable with vanishing constant term and invertible linear coefficient. It is an isomorphism of the formal group laws induced
by the coordinates $x$ and $x'$ respectively. 

In particular we are interested in the universal triple $(C,\omega,P)$ of an elliptic curve
with invariant differential and level structure consisting of a point $P$ of order 3.
This is $$C :\quad Y^2Z+a_1XYZ+a_3YZ^2=X^3$$ over $E^*=TMF_1(3)^*=\Z_{(2)}[a_1,a_3,\Delta^{-1}]$
with $\omega=\frac{dX}{2Y+a_1X+a_3}$ and $P=(0,0)$.

As $(C,\omega,P)$ is the universal triple and $(C,\omega,-P)$ is another such triple, there exists a ring endomorphism of 
$TMF_1(3)^*=\Z_{(2)}[a_1,a_3,\Delta^{-1}]$
such that the pushforward of $(C,\omega,P)$ is isomorphic to $(C,\omega,-P)$.
In fact this endomorphism is an automorphism of order 2: it sends $a_1\mapsto -a_1,a_3\mapsto -a_3$. 

The reason is that $(C,\omega,-P)$ is isomorphic to $(C^-,\omega',P=(0,0))$, where
\begin{align*}
C^-&:\quad  (Y')^2Z'-a_1X'Y'Z'-a_3Y'(Z')^2=(X')^3,\\
\omega'&=\frac{dX'}{2Y'-a_1X'-a_3},
\end{align*} 
via the isomorphism $$X'=X,\quad Y'=a_1X+Y+a_3Z,\quad  Z'=Z.$$
Note that the new coordinates $[X':Y':Z']$ are the coordinates of the negative of the point $[X:Y:Z]$.

The new coordinate on the formal group is $$x'=g(x)=\frac x{1+a_1x+a_3z(x)}.$$ 
But since we have just taken the coordinate of the negative point on the elliptic curve, we also have $$g(x)=\overline{x},$$
since the inverse on the formal group corresponds to the Chern class of the complex conjugate of the canonical bundle over $BU(1)$, i.e. $\overline{x}=[-1](x)$.
Note that this implies $g(g(x))=x$.

\bigskip

We can consider both coordinates $x$ and $x'=\overline{x}$ as ring maps $MU\to TMF_1(3)$.
By composition with $MU\langle 6 \rangle\to MU$, we obtain $x,x': MU\langle 6 \rangle\to TMF_1(3)$, and together with $\sigma:MU\langle 6 \rangle\to TMF_1(3)$,
we obtain $r_U=\frac{\sigma}{x},r_U'=\frac{\sigma}{x'}:BU\langle 6 \rangle_+\to TMF_1(3)$.
Let $\tau$ denote the involutions on $MU\langle 6 \rangle,BU\langle 6 \rangle_+, TMF_1(3)$ respectively.
From our considerations, it follows that the left diagram 
$$
\xymatrix{
MU \ar[d]^\tau \ar[dr]^{x'} \ar[r]^-x
& TMF_1(3) \ar[d]^\tau
&&
MU\langle 6 \rangle \ar[d]^\tau \ar[dr]^{\sigma} \ar[r]^\sigma
& TMF_1(3) \ar[d]^\tau
\\
MU\ar[r]^-x
&TMF_1(3)
&&
MU\langle 6 \rangle\ar[r]^\sigma
&TMF_1(3)
}
$$
commutes up to homotopy: the three maps $MU\to TMF_1(3)$ correspond to the element $x'=g(x)=\overline{x}$.
The fact that the right diagram commutes up to homotopy follows from the construction of $\sigma$ in \cite{MR1869850}.
Therefore $r_U:BU\langle 6 \rangle_+\to TMF_1(3)$ is up to homotopy invariant ---  both compositions
$\tau\circ r_U$ and $r_U\circ\tau$ are homotopic to $r'_U$. 

\bigskip

In the remaining part of this appendix, we consider maximal tori.
We show that the Pontryagin classes are invariant, and we also show (again) that $r_U$ has the same image
in $TMF_1(3)^*BString$ as $r'_U=r_U\cdot \frac{x}{x'}$. (Both $x$ and $x'$ are generators for the free rank one $TMF_1(3)^*BU$--module $TMF_1(3)^*MU$, so that there is a unique invertible element $\frac{x}{x'}$ of $TMF_1(3)^*BU$ whose product with $x'$ is $x$.)

Set $E=TMF_1(3)$. Using the complex coordinate $x$, we have isomorphisms and an injection $$E^*(BU)\cong E^*[[ c_1,c_2,\dots ]] \to E^*(BU(1)^\infty)\cong E^*[[ y_1,y_2,\dots ]],$$
where $BU(1)^\infty=\colim_N BU(1)^N$ and where each $c_k$ is mapped to the $k$--th elementary symmetric polynomial in the $y_i$.
This is induced by the map $$\sum (L_i-1): BU(1)^\infty\to BU.$$
Let $$Q(x)= \frac{x}{g(x)}=1+a_1x+a_3z(x)\in E^*[[x]].$$ 
By Hirzebruch's theory of multiplicative sequences, in $E^*[[ y_1,y_2,\dots ]]$ the element $\frac x{x'}$ corresponds to $\prod_{k=1}^{\infty}Q(y_k)$, which is symmetric in the $y_k$, so that it defines an element
in $E^*[[ \sigma_j(y_1,y_2,\dots ) \mid j\ge 1]]\cong E^*BU$. Here $\sigma_j(y_1,y_2,\dots )$ is the $j$--th elementary symmetric polynomial of the $y_i$.

We now consider the restriction to $BString$ under the map $BString\to BSO\stackrel{c}\to BU$.
Unstably, we consider a maximal torus $\T'\cong U(1)^{2N+1}$ in $U(2N+1)$ and its Weyl group $W\cong\Sigma_{2N+1}$. 
We have $$E^*B\T'\cong E^*[[y_1,\dots ,y_{2N+1}]],$$ where $y_i$ is the Chern class of the canonical line bundle $L_i$ over the $i$--th factor,
and $$E^*BU(2N+1)\cong E^*[[c_1,c_2,\dots,c_{2N+1}]]\cong E^*B\T'^W,$$ where $W$ acts on $E^*[[y_1,\dots ,y_{2N+1}]]$ by permuting the $y_i$. 
The Chern classes $c_j=\sigma_j(y_1,y_2,\dots)$ are the elementary symmetric polynomials of the $y_i$.

Comparing the unitary and special orthogonal groups and their maximal tori, we obtain the diagram
$$
\xymatrix{
E^*BU(2N+1)\cong E^*[[c_1,c_2,\dots , c_{2N+1}]]
\ar[d]\ar[r]
&
E^*BSO(2N+1)
\ar[d]
\\
E^*B\T' \cong E^*[[y_1,y_2,\dots , y_{2N+1}]]
\ar[r]
&
E^*B\T \cong E^*[[x_1,x_2,\dots ,x_{N}]]
.}
$$

The maximal torus in $SO(2N+1)$ consists of matrices $$A=\begin{pmatrix} R_{\phi_1}&&&\\&\dots&&\\&&R_{\phi_N}&\\&&&1\end{pmatrix},$$
where $R_\phi=\begin{pmatrix}\cos \phi&-\sin\phi\\ \sin\phi&\cos\phi\end{pmatrix}$.
The standard maximal torus in $U(N)$ consists of diagonal matrices. Under conjugation by 
$$\begin{pmatrix} T&&&\\&\dots &&\\&&T&\\&&&1\end{pmatrix},$$ where $T=\frac1{\sqrt 2}\begin{pmatrix}1&1\\-i&i\end{pmatrix},$
the matrix $A$ corresponds to the diagonal matrix $diag(e^{i\phi_1},e^{-i\phi_1},e^{i\phi_2},e^{-i\phi_2},\dots ,1)$,
so that we can assume that the map $B\T\to B\T'$ is induced by $$U(1)^N\to U(1)^{2N+1},\quad (z_1,\dots ,z_N)\mapsto (z_1,z_1^{-1}, \dots ,  z_N,z_N^{-1},1).$$
On the classifying spaces, the induced map $BU(1)^N\to BU(1)^{2N+1}$ pulls back $L_{2k-1}$ to $L_k$ and  $L_{2k}$ to $\overline{L_k}$ for $k\le N$,
while the pullback of $L_{2N+1}$ is trivial.
It follows that in cohomology,
$$
y_{2k-1}\mapsto x_k,\qquad y_{2k}\mapsto \overline{x_k}=g(x_k)\ \ \ \text{for $k\le N$}\ \ \text{ and} \quad y_{2N+1}\mapsto 0.
$$
For the image of the class $\frac x{x'}$, we have
$$
Q(y_{2k-1})\mapsto Q(x_k)=\frac {x_k}{g(x_k)},\quad 
Q(y_{2k})\mapsto Q(g(x_k))=\frac {g(x_k)}{g(g(x_k))}=\frac {g(x_k)}{x_k}
$$
and 
$Q(y_{2N+1})\mapsto 1$.
It follows that $\prod_{k=1}^{\infty}Q(y_k)\mapsto 1$. 

We conclude that the image of $\frac x{x'}$ in $E^*BString$ is trivial, so that 
$\Z/2$ acts trivially on $r\in E^*(BString)$.

Since the action of $\Z/2$ on $E^*B\T$ sends each $x_i\mapsto \overline{x_i}$, we see that
all $x_i\cdot\overline{x_i}$ and therefore also all Pontryagin classes are invariant elements of $E^*(BString)$.

\section{Computation of a coefficient of the cubical structure}\label{Comp}
We prove the part of Proposition \ref{cubical expansion} which is used in the rest of the paper.
The calculation uses the notation introduced in Section \ref{CubicalStr}.
\begin{prop}
Modulo the ideal $(2,a_1,a_2)$, the cubical structure corresponding to $r_U$ has the form $1+a_3x_0x_1x_2+\text{terms of higher total degree}$.
\end{prop}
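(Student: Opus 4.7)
The plan is to compute $r_U = t/u$ to low order, working throughout modulo $(2, a_1, a_2)$. Under this reduction, the Weierstrass equation collapses to $Y^2 + a_3 Y = X^3$, so solving $z + a_3 z^2 = x^3$ iteratively gives
$$z(x) = x^3 + a_3 x^6 + (\text{terms of degree} \ge 9),$$
and, as noted in the text, the formal group law simplifies to
$$x_0 +_F x_1 = x_0 + x_1 + a_3 x_0^2 x_1^2 + (\text{terms of degree} \ge 5).$$

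First I would simplify the denominator $u$. Its leading part (ignoring the $a_3$-corrections in $+_F$) is the rational function
$$\frac{(x_0+x_1)(x_1+x_2)(x_2+x_0)}{x_0 x_1 x_2 (x_0+x_1+x_2)}.$$
A direct expansion gives the mod $2$ identity
$$(x_0+x_1)(x_1+x_2)(x_2+x_0) \equiv (x_0+x_1+x_2)(x_0 x_1 + x_0 x_2 + x_1 x_2) + x_0 x_1 x_2 \pmod{2},$$
so the leading part of $u$ becomes
$$\frac{1}{x_0} + \frac{1}{x_1} + \frac{1}{x_2} + \frac{1}{x_0 + x_1 + x_2}.$$
The corrections from the quartic term $a_3 x^2 y^2$ in $+_F$ enter at degree $\ge 4$ in the $x_i$ and can be tracked explicitly.

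Next I would expand the numerator $t$. Using $z_i = x_i^3 + a_3 x_i^6 + \ldots$, each $2\times 2$ minor equals $x_i z_j - x_j z_i = x_i x_j(x_j^2 - x_i^2) + a_3 x_i x_j (x_j^5 - x_i^5) + \ldots$, the $3 \times 3$ determinant in the denominator of $t$ can be expanded along the column of $1$'s, and the product $z_0 z_1 z_2 = (x_0 x_1 x_2)^3 + a_3 (x_0 x_1 x_2)^3(x_0^3 + x_1^3 + x_2^3) + \ldots$. Combining these yields a Laurent-type expression with the same polar divisors as $u$.

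Finally I would form the ratio $t/u$, confirm the normalization $r_U(0,0,0) = 1$, and read off the coefficient of $x_0 x_1 x_2$ as $a_3$. The main obstacle is the bookkeeping: both $t$ and $u$ individually have poles along the divisors $\{x_i = 0\}$ and $\{x_0 +_F x_1 +_F x_2 = 0\}$, and these singularities must cancel in the quotient (by the theorem of the cube). The mod $2$ identity above is the key device that cleanly separates the polar and regular parts of $u$ and makes the cancellation visible after one collects terms of total degree three in the $x_i$.
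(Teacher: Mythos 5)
Your setup is correct and your strategy is essentially the paper's: reduce modulo $(2,a_1,a_2)$ so that $z(x)=x^3+a_3x^6+\cdots$ and $x_0+_Fx_1=x_0+x_1+a_3x_0^2x_1^2+\cdots$, then expand $t/u$ to low order. The preliminary identities you state all check out, and the mod $2$ factorization $(x_0+x_1)(x_1+x_2)(x_2+x_0)\equiv(x_0+x_1+x_2)(x_0x_1+x_0x_2+x_1x_2)+x_0x_1x_2$ is a pleasant way to exhibit the polar part of $u$ as $x_0^{-1}+x_1^{-1}+x_2^{-1}+(x_0+x_1+x_2)^{-1}$; the paper instead clears all denominators by dividing numerator and denominator of the combined fraction by $x_0^3x_1^3x_2^3$, obtaining two power series with equal leading terms.

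The genuine gap is that your argument stops exactly where the content of the proposition begins. The statement is a claim about one specific coefficient, and your final step is ``form the ratio \dots and read off the coefficient of $x_0x_1x_2$ as $a_3$'' --- the coefficient is asserted, not derived. Note that the corrections you defer (the $a_3x^6$ term of $z(x)$ entering the minors, the $3\times 3$ determinant and $z_0z_1z_2$, and the $a_3x_0^2x_1^2$ term of $+_F$ entering both parts of $u$) all sit at relative degree exactly $3$ above the respective leading terms, so every one of them contributes to the coefficient of $x_0x_1x_2$ in $t/u$ and none may be discarded; your remark that the $+_F$ corrections ``enter at degree $\ge 4$'' obscures the fact that degree $4$ against a degree $1$ leading term is precisely the order that matters here. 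Verifying that these several contributions assemble to exactly $a_3$ (and do not cancel) is the whole computation; in the paper it is the identity $v=a_3\,x_0x_1x_2\,w$ at the end of Appendix~B, where $w$ is the common leading term of numerator and denominator and $v$ is the difference of their next-order terms. Until that bookkeeping is carried out, the proposition is not proved.
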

\begin{proof}
We compute modulo terms of higher order and use the notation $\sum\limits_{cyc}$ for summation over cyclic permutations of the indices $0, 1, 2$
and $\sum\limits_{sym}$ for summation over all permutations:
\begin{align*}
&r_U=\frac tu = \frac{\left\lvert\begin{matrix}x_0&z_0\\ x_1 & z_1\end{matrix}\right\rvert \left\lvert\begin{matrix}x_1&z_1\\ x_2 & z_2\end{matrix}\right\rvert \left\lvert\begin{matrix}x_2&z_2\\ x_0 & z_0\end{matrix}\right\rvert}{\left\lvert\begin{matrix}x_0&1&z_0\\ x_1 & 1 & z_1\\ x_2 & 1 & z_2\end{matrix}\right\rvert z_0z_1z_2} 
\cdot \frac{x_0x_1x_2(x_0+_F x_1+_Fx_2)}{(x_0+_F x_1)(x_1+_F x_2)(x_2+_F x_0)}\\
\\
&=\frac{\prod_{cyc}(x_0\cdot z(x_1)-x_1\cdot z(x_0))\cdot \prod_{cyc}x_0 \cdot (x_0+_F x_1+_F x_2)  }{\left( \sum_{cyc}x_1\cdot z(x_0)-x_0\cdot z(x_1) \right )\cdot \prod_{cyc}z(x_0)\cdot  \prod_{cyc}(x_0+_Fx_1)}\\
&=\frac{\prod_{cyc}(x_1^2+a_3x_1^5-x_0^2-a_3x_0^5)\cdot \left( \sum_{cyc}x_0+\sum_{cyc} a_3x_0^2x_1^2\right)  }
{\left( \sum_{cyc}x_1x_0^3+a_3x_1x_0^6-x_0x_1^3-a_3x_0x_1^6\right )\cdot \prod_{cyc}(1+a_3x_0^3)\cdot  \prod_{cyc}(x_0+x_1+a_3x_0^2x_1^2)},
\end{align*}
where we have divided numerator and denominator by $x_0^3x_1^3x_2^3$ in the last step.
In the resulting fraction, the two terms of lowest order of the numerator have the form 
\begin{align*}
& \prod_{cyc}(x_1^2-x_0^2)\cdot \left( \sum_{cyc}x_0\right)   \\  +&  \prod_{cyc}(x_1^2-x_0^2)\cdot  \left( \sum_{cyc} a_3x_0^2x_1^2\right)+ 
\left(\sum_{cyc} a_3(x_1^5-x_0^5)(x_2^2-x_1^2)(x_0^2-x_2^2)\right) \cdot \left( \sum_{cyc}x_0\right),
\end{align*}
and for the denominator we obtain the two lowest order terms 
\begin{align*}
&\left( \sum_{cyc}x_1x_0^3-x_0x_1^3 \right )\cdot \prod_{cyc}(x_0+x_1) \\
&+\left( \sum_{cyc}x_1x_0^3-x_0x_1^3 \right )\cdot \left(\sum_{cyc}(x_0+x_1)(x_1+x_2)a_3x_0^2x_2^2\right) \\
&+ \left( \sum_{cyc}x_1x_0^3-x_0x_1^3 \right )\cdot \left(\sum_{cyc} a_3x_0^3\right)\prod_{cyc}(x_0+x_1) 
+\left( \sum_{cyc}a_3(x_1x_0^6-x_0x_1^6) \right )\cdot \prod_{cyc}(x_0+x_1).
\end{align*}
Note that the leading terms of numerator and denominator agree, so that the quotient has the form $1+\frac vw$ where
$v$ is the difference of the terms of second lowest order in numerator and denominator 
and 
$w$ is the common term of lowest order.
Now we do the computation modulo 2 and obtain the equality
\begin{align*}
&\frac v{a_3}= \left( \sum_{sym}x_0^4x_1^2 \right ) \cdot \left( \sum_{cyc} x_0^2x_1^2\right )
+ \left(\sum_{sym}x_0^5x_1^4\right)\cdot \left( \sum_{cyc} x_0\right )\\
&+ \left(\sum_{sym}x_0^3x_1\right )\cdot \left(x_0^2x_1^2x_2^2 +  \sum_{cyc} x_0^3x_1^3+ \sum_{sym}x_0^3x_1^2x_2\right )\\
&+ \left(\sum_{sym}x_0^3x_1\right )\cdot \left(\sum_{sym}x_0^5x_1+x_0^4x_1^2+x_0^3x_1^2x_2\right )
+   \left(\sum_{sym} x_0^8x_1^2+x_0^7x_1^3+x_0^7x_1^2x_2+x_0^6x_1^3x_2\right )\\
&= \sum_{sym} x_0^5x_1^4x_2+ x_0^5x_1^3x_2^2+ x_0^6x_1^3x_2 = x_0x_1x_2 \cdot  \left(\sum_{sym} x_0^4x_1^3+ x_0^4x_1^2x_2^1+ x_0^5x_1^2\right )\\
&= \  x_0x_1x_2 \cdot  \left(\sum_{sym} x_0^4x_1^2\right )\cdot \left( \sum_{cyc}x_0\right)=x_0x_1x_2w.
\end{align*}
\end{proof}

\bibliographystyle{amsalpha}

\bibliography{toda}

\def\cprime{$'$}
\providecommand{\bysame}{\leavevmode\hbox to3em{\hrulefill}\thinspace}
\providecommand{\MR}{\relax\ifhmode\unskip\space\fi MR }
\providecommand{\MRhref}[2]{%
  \href{http://www.ams.org/mathscinet-getitem?mr=#1}{#2}
}
\providecommand{\href}[2]{#2}
\begin{thebibliography}{EKMM97}

\bibitem[ABP67]{MR0219077}
D.~W. Anderson, E.~H. Brown, Jr., and F.~P. Peterson, \emph{The structure of
  the {S}pin cobordism ring}, Ann. of Math. (2) \textbf{86} (1967), 271--298.
  \MR{0219077 (36 \#2160)}

\bibitem[AHR]{AHR10}
Matt Ando, Michael~J. Hopkins, and Charles Rezk, \emph{{Multiplicative
  orientations of $KO$-theory and of the spectrum of topological modular
  forms}}, unpublished, 2010.

\bibitem[AHS01]{MR1869850}
M.~Ando, M.~J. Hopkins, and N.~P. Strickland, \emph{Elliptic spectra, the
  {W}itten genus and the theorem of the cube}, Invent. Math. \textbf{146}
  (2001), no.~3, 595--687. \MR{1869850 (2002g:55009)}

\bibitem[Ati66]{MR0206940}
M.~F. Atiyah, \emph{{$K$}-theory and reality}, Quart. J. Math. Oxford Ser. (2)
  \textbf{17} (1966), 367--386. \MR{0206940 (34 \#6756)}

\bibitem[DR73]{MR0337993}
P.~Deligne and M.~Rapoport, \emph{Les sch\'emas de modules de courbes
  elliptiques}, Modular functions of one variable, {II} ({P}roc. {I}nternat.
  {S}ummer {S}chool, {U}niv. {A}ntwerp, {A}ntwerp, 1972), Springer, Berlin,
  1973, pp.~143--316. Lecture Notes in Math., Vol. 349. \MR{0337993 (49
  \#2762)}

\bibitem[Dug05]{MR2240234}
Daniel Dugger, \emph{An {A}tiyah-{H}irzebruch spectral sequence for
  {$KR$}-theory}, $K$-Theory \textbf{35} (2005), no.~3-4, 213--256 (2006).
  \MR{2240234 (2007g:19004)}

\bibitem[EKMM97]{MR1417719}
A.~D. Elmendorf, I.~Kriz, M.~A. Mandell, and J.~P. May, \emph{Rings, modules,
  and algebras in stable homotopy theory}, Mathematical Surveys and Monographs,
  vol.~47, American Mathematical Society, Providence, RI, 1997, With an
  appendix by M. Cole. \MR{1417719 (97h:55006)}

\bibitem[Goe10]{MR2648680}
Paul~G. Goerss, \emph{Topological modular forms [after {H}opkins, {M}iller and
  {L}urie]}, Ast\'erisque (2010), no.~332, Exp. No. 1005, viii, 221--255,
  S{\'e}minaire Bourbaki. Volume 2008/2009. Expos{\'e}s 997--1011. \MR{2648680
  (2011m:55003)}

\bibitem[HBJ92]{MR1189136}
Friedrich Hirzebruch, Thomas Berger, and Rainer Jung, \emph{Manifolds and
  modular forms}, Aspects of Mathematics, E20, Friedr. Vieweg \& Sohn,
  Braunschweig, 1992, With appendices by Nils-Peter Skoruppa and by Paul Baum.
  \MR{1189136 (94d:57001)}

\bibitem[HK01]{MR1808224}
Po~Hu and Igor Kriz, \emph{Real-oriented homotopy theory and an analogue of the
  {A}dams-{N}ovikov spectral sequence}, Topology \textbf{40} (2001), no.~2,
  317--399. \MR{1808224 (2002b:55032)}

\bibitem[Hop02]{MR1989190}
M.~J. Hopkins, \emph{Algebraic topology and modular forms}, Proceedings of the
  {I}nternational {C}ongress of {M}athematicians, {V}ol. {I} ({B}eijing, 2002)
  (Beijing), Higher Ed. Press, 2002, pp.~291--317. \MR{1989190 (2004g:11032)}

\bibitem[HS99]{MR1601906}
Mark Hovey and Neil~P. Strickland, \emph{Morava {$K$}-theories and
  localisation}, Mem. Amer. Math. Soc. \textbf{139} (1999), no.~666, viii+100.
  \MR{1601906 (99b:55017)}

\bibitem[Hu01]{MR1831351}
Po~Hu, \emph{The {$\rm Ext^0$}-term of the {R}eal-oriented {A}dams-{N}ovikov
  spectral sequence}, Homotopy methods in algebraic topology ({B}oulder, {CO},
  1999), Contemp. Math., vol. 271, Amer. Math. Soc., Providence, RI, 2001,
  pp.~141--153. \MR{1831351 (2002d:55023)}

\bibitem[KL02]{MR1909866}
Nitu Kitchloo and Gerd Laures, \emph{Real structures and {M}orava
  {$K$}-theories}, $K$-Theory \textbf{25} (2002), no.~3, 201--214. \MR{1909866
  (2003i:55005)}

\bibitem[KLW04]{MR2093483}
Nitu Kitchloo, Gerd Laures, and W.~Stephen Wilson, \emph{The {M}orava
  {$K$}-theory of spaces related to {$BO$}}, Adv. Math. \textbf{189} (2004),
  no.~1, 192--236. \MR{2093483 (2005k:55002)}

\bibitem[Kro10]{MR2607411}
William~C. Kronholm, \emph{The {${\rm RO}(G)$}-graded {S}erre spectral
  sequence}, Homology, Homotopy Appl. \textbf{12} (2010), no.~1, 75--92.
  \MR{2607411 (2011e:55023)}

\bibitem[KW13]{MR3044602}
Nitu Kitchloo and W.~Stephen Wilson, \emph{Unstable splittings for real
  spectra}, Algebr. Geom. Topol. \textbf{13} (2013), no.~2, 1053--1070.
  \MR{3044602}

\bibitem[KW15]{KW13}
\bysame, \emph{{The $ER(n)$-cohomology of $BO(q)$, and real Johnson-Wilson
  orientations for vector bundles}}, Bull. London Math. Soc. \textbf{47}
  (2015), 835?847.

\bibitem[Lau]{L13}
Gerd Laures, \emph{{Characteristic classes in $TMF$ of level $\Gamma_1(3)$}},
  appears in Transactions of the AMS, arXiv:1304.3588, 2014.

\bibitem[Lau04]{MR2076927}
\bysame, \emph{{$K(1)$}-local topological modular forms}, Invent. Math.
  \textbf{157} (2004), no.~2, 371--403. \MR{MR2076927 (2005h:55003)}

\bibitem[May96]{MR1413302}
J.~P. May, \emph{Equivariant homotopy and cohomology theory}, CBMS Regional
  Conference Series in Mathematics, vol.~91, Published for the Conference Board
  of the Mathematical Sciences, Washington, DC; by the American Mathematical
  Society, Providence, RI, 1996, With contributions by M. Cole, G.
  Comeza{\~n}a, S. Costenoble, A. D. Elmendorf, J. P. C. Greenlees, L. G.
  Lewis, Jr., R. J. Piacenza, G. Triantafillou, and S. Waner. \MR{1413302
  (97k:55016)}

\bibitem[Mil89]{MR1022688}
Haynes Miller, \emph{The elliptic character and the {W}itten genus}, Algebraic
  topology ({E}vanston, {IL}, 1988), Contemp. Math., vol.~96, Amer. Math. Soc.,
  Providence, RI, 1989, pp.~281--289. \MR{1022688 (90i:55005)}

\bibitem[MR09]{MR2508904}
Mark Mahowald and Charles Rezk, \emph{Topological modular forms of level 3},
  Pure Appl. Math. Q. \textbf{5} (2009), no.~2, Special Issue: In honor of
  Friedrich Hirzebruch. Part 1, 853--872. \MR{2508904 (2010g:55010)}

\bibitem[Nov62]{MR0157381}
S.~P. Novikov, \emph{Homotopy properties of {T}hom complexes}, Mat. Sb. (N.S.)
  \textbf{57 (99)} (1962), 407--442. \MR{0157381 (28 \#615)}

\bibitem[Rav84]{MR737778}
Douglas~C. Ravenel, \emph{Localization with respect to certain periodic
  homology theories}, Amer. J. Math. \textbf{106} (1984), no.~2, 351--414.
  \MR{737778 (85k:55009)}

\bibitem[RW80]{MR584466}
Douglas~C. Ravenel and W.~Stephen Wilson, \emph{The {M}orava {$K$}-theories of
  {E}ilenberg-{M}ac {L}ane spaces and the {C}onner-{F}loyd conjecture}, Amer.
  J. Math. \textbf{102} (1980), no.~4, 691--748. \MR{584466 (81i:55005)}

\bibitem[RW77]{MR0448337}
\bysame, \emph{The {H}opf ring for complex cobordism}, J. Pure Appl. Algebra
  \textbf{9} (1976/77), no.~3, 241--280. \MR{0448337 (56 \#6644)}

\bibitem[RWY98]{MR1648284}
Douglas~C. Ravenel, W.~Stephen Wilson, and Nobuaki Yagita,
  \emph{Brown-{P}eterson cohomology from {M}orava {$K$}-theory}, $K$-Theory
  \textbf{15} (1998), no.~2, 147--199. \MR{1648284 (2000d:55012)}

\bibitem[Sil09]{MR2514094}
Joseph~H. Silverman, \emph{The arithmetic of elliptic curves}, second ed.,
  Graduate Texts in Mathematics, vol. 106, Springer, Dordrecht, 2009.
  \MR{2514094 (2010i:11005)}

\bibitem[Su07]{MR2709571}
Hsin-hao Su, \emph{The {E}(1,2) cohomology of the {E}ilenberg-{M}ac{L}ane space
  {K}({Z},3)}, ProQuest LLC, Ann Arbor, MI, 2007, Thesis (Ph.D.)--The Johns
  Hopkins University. \MR{2709571}

\bibitem[Tho54]{MR0061823}
Ren{\'e} Thom, \emph{Quelques propri\'et\'es globales des vari\'et\'es
  diff\'erentiables}, Comment. Math. Helv. \textbf{28} (1954), 17--86.
  \MR{0061823 (15,890a)}

\end{thebibliography}

\end{document}